\documentclass[12pt]{article}
\usepackage{a4wide}

\usepackage[a4paper, margin=2.3cm]{geometry}
\usepackage{amsmath,amssymb,amsthm}
\usepackage{color}
\usepackage{parskip}
\usepackage{hyperref}
\usepackage{parskip}
\usepackage{setspace}
\usepackage{graphicx}

\newtheorem{theorem}{Theorem}[section]
\newtheorem{remark}{Remark}[section]

\newtheorem{lemma}[theorem]{Lemma}
\newtheorem{proposition}[theorem]{Proposition}
\newtheorem{claim}[theorem]{Claim}

\newtheorem{definition}[theorem]{Definition}

\newtheorem*{definition*}{Definition}

\def\R{\mathbb{R}}
\newcommand{\Int}{\textrm{Int}}

\begin{document}
\title{Pinned simplices and connections to product of sets on paraboloids}
\author{Alex Iosevich\thanks{Department of Mathematics, University of Rochester, USA. Email: alex.iosevich@rochester.edu}
\and Minh-Quy Pham\thanks{Department of Mathematics, University of Rochester, USA. Email: qpham3@ur.rochester.edu}
\and Thang Pham\thanks{University of Science, Vietnam National University, Hanoi. Email: thangpham.math@vnu.edu.vn}
\and Chun-Yen Shen\thanks{Department of Mathematics, National Taiwan University, Taiwan. Email: cyshen@math.ntu.edu.tw}}
\maketitle

\begin{abstract} In this paper we obtain improved dimensional thresholds for dot product sets corresponding to compact subsets of a paraboloid. As a direct application of these estimates, we obtain significant improvements to the best known dimensional thresholds that guarantee that a given compact subset of Euclidean space determines a positive proportion of all possible congruence classes of simplexes. In many regimes this improves the results previously obtained by Erdogan-Hart-Iosevich (\cite{EHI}), Greenleaf-Iosevich-Liu-Palsson (\cite{GILP}) and others. 

\end{abstract}

\section{Introduction}
In a very recent paper, a connection between isosceles dot-product triangles in a subset of a paraboloid and isosceles distance triangles in a subset of lower dimensional vector space over finite fields has been discovered by Chang, Mohammadi, Pham, and Shen in \cite{Chang}. More precisely, assume that we have three points $a=(a_1, \ldots, a_{d}, a_{d+1}), b=(b_1, \ldots, b_d, b_{d+1}), c=(c_1, \ldots, c_d, c_{d+1})$ on the paraboloid defined by $x_{d+1}=x_1^2+\cdots+x_d^2$, the equation 
\[a\cdot b=a\cdot c,\]
can be written as
\begin{equation}\label{eq:triangle}
\left\vert\frac{-\overline{a}}{2|\overline{a}|^2}-\overline{b}\right\vert=\left\vert\frac{-\overline{a}}{2|\overline{a}|^2}-\overline{c}\right\vert,\end{equation}
where $\overline{x}=(x_1, \ldots, x_d)$ for any $x$ on the paraboloid, and $|\overline{x}|^2=x_1^2+\cdots+x_d^2$.

This tells us that if $a\cdot b=a\cdot c$, then we have an isosceles triangle with the apex $\frac{-\overline{a}}{2|\overline{a}|}$ and the base with two vertices $\overline{b}$ and $\overline{c}$ in the vector space of one dimension lower. As a consequence of this identity, they are able to provide a link between the Erd\H{o}s-Falconer distance problem and the problem of products of sets in paraboloids. As a consequence, several improvements of well-known results in the literature have been obtained. 

The main purpose of this paper is to study applications of the connection described above in the continuous (Euclidean) setting. More precisely, we will focus on the distribution of dot-product values determined by sets on the truncated paraboloid and the structures of pinned simplices in arbitrary sets, where the truncated paraboloid in $\R^d$ is defined by 
$$P_d:=\{(\overline{x}, \vert \overline{x}\vert^2)\in \R^d: |\overline{x}|\le 1\},$$ where ~$|\overline{x}|^2=x_1^2+\cdots+x_{d-1}^2$.

\subsection{Products of sets on paraboloids}
Before stating our main results, let us recall the following result due to Erdo\u{g}an, Hart, and Iosevich in \cite{EHI}.

\begin{theorem}[\cite{EHI}]
Let $\mathbb{S}^{d-1}$ be the unit sphere centered at the origin in $\mathbb{R}^d$. Let $E\subset \mathbb{S}^{d-1}$ satisfy 
$\dim_H(E)>\frac{d}{2}$. Assume that $\mu_E$ is a Frostman measure on $E$. Then 
\[\mathcal{L}^1(\{x\cdot y\colon x, y\in E\})=\mathcal{L}^1(\{|x-y|=\sqrt{(x_1-y_1)^2+\cdots+(x_d-y_d)^2}\colon x\in E\})>0\]
for almost $\mu$-every $y\in E$.
\end{theorem}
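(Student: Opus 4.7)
The plan is to reduce the set-theoretic statement to absolute continuity of a pinned pushforward measure, then verify that absolute continuity via a single Fourier-side energy bound. Define the pinned distance measure $\nu_y := (x\mapsto |x-y|)_{\ast}\mu_E$ on $[0,\infty)$. Since $|x-y|^2 = 2 - 2\,x\cdot y$ on the unit sphere, the pushforward of $\mu_E$ under $x\mapsto x\cdot y$ differs from $\nu_y$ by a smooth, invertible change of variables; hence if one shows $\nu_y \ll \mathcal L^1$ for $\mu_E$-a.e.\ $y$, both the distance and dot-product sets will automatically have positive Lebesgue measure.

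By Plancherel, absolute continuity of $\nu_y$ with $L^2$ density is implied by the single estimate
\[
J := \int_E \int_{\R} |\widehat{\nu_y}(\tau)|^2\, d\tau\, d\mu_E(y) < \infty,
\]
which forces $\widehat{\nu_y}\in L^2(\R)$ for $\mu_E$-a.e.\ $y$. Expanding the square and using the Fourier representation of the Dirac mass,
\[
J = \iiint_{E^3} \delta(|x-y|-|x'-y|)\, d\mu_E(x)\, d\mu_E(x')\, d\mu_E(y).
\]
The spherical identity $|x-y|^2 - |x'-y|^2 = 2(x'-x)\cdot y$, valid whenever $x,x',y\in S^{d-1}$, combined with the coarea formula, converts the delta of the nonlinear quantity into the delta of the linear form $(x-x')\cdot y$, introducing a bounded Jacobian factor $\tfrac{|x-y|+|x'-y|}{2}\le 2$. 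A second application of Fourier inversion then yields
\[
J \;\lesssim\; \int_E \int_{\R} |\widehat{\mu_E}(\tau y)|^2\, d\tau\, d\mu_E(y).
\]

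The main obstacle is bounding this last integral. If the outer integration were against the surface measure $\sigma$ on $S^{d-1}$ rather than against $\mu_E$, polar coordinates would identify it with
\[
c \int_{\R^d} |\widehat{\mu_E}(\xi)|^2 |\xi|^{-(d-1)}\, d\xi \;=\; c' \, I_1(\mu_E),
\]
which is finite already under $\dim_H E > 1$. The substantive difficulty is that we are integrating in $y$ against the singular Frostman measure $\mu_E$ instead. I would address this either by approximating $\mu_E$ with a mollification $\mu_E\ast \phi_\epsilon$ and using the Frostman bound $\mu_E(B(y,r))\le C r^s$ with $s>d/2$ to produce a uniform-in-$\epsilon$ spherical density bound that survives $\epsilon\to 0$, or by invoking a Mattila-type spherical average estimate $\int_{S^{d-1}}|\widehat{\mu_E}(R\omega)|^2 d\sigma(\omega)\lesssim R^{-\alpha(s,d)}$ and interpolating against $d\mu_E$. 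In either route the threshold $\dim_H E > d/2$ enters as precisely the balance point that keeps the resulting iterated integral convergent.
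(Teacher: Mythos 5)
Your reduction of the theorem to the finiteness of
\[
J=\int_E\int_{\R}\bigl|\widehat{\mu_E}(\tau y)\bigr|^2\,d\tau\,d\mu_E(y)
\]
is the correct and essentially the same starting point that Erdo\u{g}an--Hart--Iosevich use, and that this paper reproduces (in generalized form) in the proof of Lemma~\ref{lem_dotproduct}: for the linear projection $x\mapsto x\cdot y$ the pushforward of $\mu_E$ has Fourier transform $\tau\mapsto\widehat{\mu_E}(\tau y)$, so $J$ is exactly the averaged $L^2$ norm of the densities $\nu_y$. The detour through $|x-y|$ and the delta-function/coarea manipulation is an unnecessary complication (working directly with the dot product is cleaner and avoids worrying about the Jacobian vanishing when $x\approx y$), but it is not wrong.

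The genuine gap is at the step you flag yourself as ``the main obstacle,'' and neither of the two routes you sketch is carried out or would deliver the claim as described. The spherical-average observation --- that integrating in $y$ against $\sigma$ would give $\int|\widehat{\mu_E}(\xi)|^2|\xi|^{-(d-1)}\,d\xi$, finite for $\dim_H E>1$ --- is true but misleading, because the actual weight you get when the $y$-integration is against the singular Frostman measure $\mu_E$ is strictly \emph{worse} than $|\xi|^{-(d-1)}$, not comparable to it, and this is precisely why the threshold lands at $d/2$ rather than at $1$. Invoking a Mattila-type spherical decay $R^{-\beta(s)}$ does not help directly either, since that estimate is for $d\sigma$, not for $d\mu_E$, and there is no straightforward interpolation between the two. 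The mechanism that actually closes the argument (as in \cite{EHI} and in Lemma~\ref{lem_dotproduct}) is a cutoff-and-Schur argument: writing $\mu_E=\phi\mu_E$ for a smooth compactly supported $\phi$ equal to $1$ on $\mathrm{supp}\,\mu_E$, one has $\widehat{\mu_E}=\widehat{\phi}*\widehat{\mu_E}$, hence
\[
J\;\lesssim\;\int\bigl|\widehat{\mu_E}(\xi)\bigr|^2\left(\iint\bigl|\widehat{\phi}(\tau y-\xi)\bigr|\,d\tau\,d\mu_E(y)\right)d\xi,
\]
and the inner bracket is then bounded by $|\xi|^{-s}$ using only the Frostman property $\mu_E(B(y,r))\lesssim r^{s}$ and the rapid decay of $\widehat{\phi}$: for fixed $\xi$ the integrand is negligible unless $\tau\approx|\xi|$ and $y$ lies within distance $\approx|\xi|^{-1}$ of $\xi/|\xi|$, which costs $\lesssim 1$ in $\tau$ and $\lesssim|\xi|^{-s}$ in $\mu_E$. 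The resulting bound $J\lesssim\int|\widehat{\mu_E}(\xi)|^2|\xi|^{-s}\,d\xi$ is an energy integral $I_{d-s}(\mu_E)$, finite precisely when $d-s<s$, i.e.\ $s>d/2$. This pointwise Frostman estimate on the inner integral is the missing ingredient in your write-up, and it is what makes the threshold $d/2$ appear; without it, your ``balance point'' claim is an assertion, not a proof.
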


It is natural to expect the same dimensional threshold $\frac{d}{2}$ to hold for other varieties, for instance, paraboloids or cones. In the following results, using the above connection and recent developments on the Falconer distance problem, we are able to obtain improvements for paraboloids in $\mathbb{R}^d$. 

For $E\subset \mathbb{R}^d$ and $x\in E$, we define the product set and its pinned analog formed by $E$ at $x$, respectively, by
\begin{align*}
    \Pi(E):=E\cdot E=\{ x\cdot y: x,y\in E\}, ~\Pi_x^1(E):=\{ x\cdot y: y\in E\}.
\end{align*}

\begin{theorem}\label{thm1.1}
Let $E\subset \R^d$ be a compact set on $P_d$, $d\geq 3$. Assume that
\begin{align*}
    \dim_H(E)>\frac{d-1}{2}+\frac{1}{4}+\frac{1}{8d-12},
\end{align*}
then there exist $x\in E$ and a measure $\nu_x$ supported on $\Pi_x^1(E)$ such that $||\nu_x||_{L^2}<\infty$. In particular,  $\Pi_x^1(E)$ has positive Lebesgue measure.
\end{theorem}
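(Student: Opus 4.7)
The plan is to use the identity~(\ref{eq:triangle}) to reduce the pinned dot product problem on $E\subset P_d$ to a pinned (squared) distance problem in $\R^{d-1}$. A direct computation extending~(\ref{eq:triangle}) shows that for $x=(\overline{x},|\overline{x}|^2)$ and $y=(\overline{y},|\overline{y}|^2)$ on $P_d$,
\[
x\cdot y \;=\; |\overline{x}|^{2}\left|\frac{-\overline{x}}{2|\overline{x}|^{2}}-\overline{y}\right|^{2}-\frac{1}{4},
\]
so the pinned dot product set $\Pi_x^{1}(E)$ is an affine image, with nonzero scaling $|\overline{x}|^{2}$, of the pinned squared distance set from $\overline{x}':=-\overline{x}/(2|\overline{x}|^{2})\in\R^{d-1}$ to $F:=\{\overline{y}:y\in E\}\subset\R^{d-1}$. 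In particular, an $L^{2}$ density on $\Pi_x^{1}(E)$ is equivalent to an $L^{2}$ density on $\{|\overline{x}'-\overline{y}|^{2}:\overline{y}\in F\}$ once $|\overline{x}|$ is bounded away from $0$, and both $F$ and its inversion $F':=\{\overline{x}':x\in E\}$ share the Hausdorff dimension $\dim_H(E)$.

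The next step is to fix a Frostman measure $\mu$ on $E$ of exponent $s>\tfrac{d-1}{2}+\tfrac{1}{4}+\tfrac{1}{8d-12}$. The Frostman bound $\mu(B(0,\varepsilon))\lesssim \varepsilon^{s}$ allows me to discard a neighborhood of the origin and assume $|\overline{x}|\sim 1$ on $\mathrm{supp}(\mu)$, which makes the maps $\pi(x)=\overline{x}$ and $\iota(\overline{x})=-\overline{x}/(2|\overline{x}|^{2})$ bi-Lipschitz on the support. Set $\mu_F:=\pi_{*}\mu$ on $F$ and $\mu_{F'}:=(\iota\circ\pi)_{*}\mu$ on $F'$; both are Frostman measures of exponent $s$ on compact subsets of $\R^{d-1}$. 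For $u\in F'$, let $\sigma_u$ denote the pushforward of $\mu_F$ under $\overline{y}\mapsto |u-\overline{y}|^{2}$. The theorem then reduces to the $L^{2}$-averaged bound
\begin{equation*}
\int_{F'}\|\sigma_u\|_{L^{2}(\R)}^{2}\,d\mu_{F'}(u)<\infty,
\end{equation*}
since this produces a $\mu_{F'}$-generic pin $u=\overline{x}'$ with $\|\sigma_u\|_{L^{2}}<\infty$, which transfers back through the affine change of variable to a pin $x\in E$ and a measure $\nu_x$ on $\Pi_x^{1}(E)$ with $\|\nu_x\|_{L^{2}}<\infty$.

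To prove the averaged bound I would apply Plancherel to rewrite
\[
\|\sigma_u\|_{L^{2}}^{2}=\int_{\R}\left|\int_{F}e^{-2\pi i s|u-\overline{y}|^{2}}\,d\mu_F(\overline{y})\right|^{2}ds,
\]
then expand $|u-\overline{y}|^{2}=|u|^{2}-2u\cdot\overline{y}+|\overline{y}|^{2}$ and recognize the inner integral, up to a unimodular factor, as a Fourier extension operator on the truncated paraboloid in $\R^{d-1}$ applied to $\mu_F$ and evaluated at the point $(2su,s)$. Integrating against $\mu_{F'}$ then converts the problem into a weighted restriction/decoupling estimate on the paraboloid in $\R^{d-1}$ with Frostman weights on both the target and frequency sides. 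The recent refinements of the Du--Zhang machinery produce exactly the pinned Falconer threshold $n/2+1/4+1/(8n-4)$ for an $L^{2}$-density conclusion in $\R^{n}$; taking $n=d-1$ yields the hypothesis $\dim_H(E)>\tfrac{d-1}{2}+\tfrac{1}{4}+\tfrac{1}{8d-12}$.

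The main obstacle is that the natural pinned Falconer theorem is diagonal (pin and target live in the same set), whereas here the pin-set $F'$ and the target-set $F$ are distinct, being related by the inversion $\iota$. I therefore need an off-diagonal version of the $L^{2}$-density pinned distance theorem. Fortunately, the weighted restriction bounds depend only on the Frostman exponents of the two measures and on their compact supports, not on any assumed coincidence of the two sets, so the wave-packet/decoupling argument should go through essentially unchanged in this asymmetric setting; carefully verifying that this is indeed the case is the step that requires the most attention.
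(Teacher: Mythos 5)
Your reduction to a pinned (squared) distance problem in $\R^{d-1}$ via the inversion identity is exactly the paper's strategy, and the dimensional threshold you extract -- the $L^2$-density pinned Falconer exponent $\tfrac{n}{2}+\tfrac{1}{4}+\tfrac{1}{8n-4}$ with $n=d-1$ -- is the right one. The main difference is implementation: rather than rebuilding a weighted restriction estimate for the lift of $\mu_F$, the paper quotes Liu's $L^2$-identity result (Proposition~\ref{prop_Liu_pinned_distance}, from \cite{LiuL2}) together with the spherical-average decay $\beta(s)=\tfrac{(n-1)s}{n}$ of \cite{DGOWWZ,DZ} applied in $\R^{d-1}$, and simply solves $s+\beta(s)>d-1$, which gives $s>\tfrac{(d-1)^2}{2d-3}=\tfrac{d-1}{2}+\tfrac{1}{4}+\tfrac{1}{8d-12}$. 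Your off-diagonality concern is the right thing to flag, but it is already resolved inside Liu's framework: his exceptional-set bound controls the set of bad pins in \emph{all} of $\R^n$, not merely in the target set, and the underlying $L^2$ identity couples the energy of the target measure $\mu_{E'}$ with the Fourier decay of the pin measure $\mu_{F'}$, two quantities that are estimated independently. This is precisely what the paper uses in Theorem~\ref{thm_1a}, which is stated for two (possibly distinct) compact subsets of $P_d$; and even in the application with $E=F$ the images $E'=H_1(E)$ and $F'=H_2(E)$ are genuinely different subsets of $\R^{d-1}$, so the asymmetry is unavoidable. Citing Liu's result in its natural two-measure form therefore lets you bypass the ``verification that requires the most attention'' rather than redo the wave-packet argument.
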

In three dimensions, we are able to obtain a better dimensional threshold. 
\begin{theorem}\label{thm1.2}
Let $E\subset \R^3$ be a compact set on $P_3$. Assume that
\begin{align*}
    \dim_H(E)>\frac{5}{4},
\end{align*}
then there exists $x\in E$ such that $\Pi_x^1(E)$ has positive Lebesgue measure.
\end{theorem}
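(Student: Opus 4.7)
\textbf{Plan of proof for Theorem \ref{thm1.2}.} My plan is to transfer the question to a pinned distance problem in the plane, where the Guth--Iosevich--Ou--Wang threshold $\tfrac{5}{4}$ is available. Expanding on the identity~\eqref{eq:triangle}, a direct computation shows that for any $x=(\bar{x},|\bar{x}|^{2})$ and $y=(\bar{y},|\bar{y}|^{2})$ on $P_{3}$,
\[
x\cdot y=|\bar{x}|^{2}\,\bigl|\iota(\bar{x})-\bar{y}\bigr|^{2}-\tfrac{1}{4},\qquad \iota(\bar{x}):=\tfrac{-\bar{x}}{2|\bar{x}|^{2}}.
\]
Thus, for fixed $x$, the pinned dot product set $\Pi^{1}_{x}(E)$ is the image of the pinned distance set $\{|\iota(\bar{x})-\bar{y}|:\bar{y}\in\bar{E}\}$ under a smooth bijection, where $\bar{E}\subset\R^{2}$ is the projection of $E$ onto the first two coordinates. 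In particular, positive Lebesgue measure of the former is equivalent to positive Lebesgue measure of the latter.

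Since $\bar{x}\mapsto(\bar{x},|\bar{x}|^{2})$ is bi-Lipschitz on $\{|\bar{x}|\le 1\}$, we have $\dim_{H}(\bar{E})=\dim_{H}(E)>\tfrac{5}{4}$. Restricting $E$ to the part lying over an annulus $\{c\le|\bar{x}|\le 1\}$ (which preserves Hausdorff dimension strictly above $\tfrac{5}{4}$ via a standard pigeonhole/Frostman decomposition) makes $\iota$ bi-Lipschitz on $\bar{E}$, so the inverted set $A:=\iota(\bar{E})$ also satisfies $\dim_{H}(A)>\tfrac{5}{4}$. The theorem is thereby reduced to the planar statement that there exists $p\in A$ for which the pinned distance set $\{|p-y|:y\in\bar{E}\}$ has positive Lebesgue measure.

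This is a bipartite version of the Guth--Iosevich--Ou--Wang pinned Falconer theorem in $\R^{2}$. Choose Frostman measures $\mu$ on $A$ and $\nu$ on $\bar{E}$ of exponents strictly above $\tfrac{5}{4}$, and for $p\in A$ let $\nu_{p}$ denote the pushforward of $\nu$ under $y\mapsto|p-y|$. Plancherel together with polar coordinates reduces $\int\|\nu_{p}\|_{L^{2}}^{2}\,d\mu(p)$ to a weighted spherical integral in $\widehat{\mu}$ and $\widehat{\nu}$, and the cone decoupling plus refined Strichartz argument underlying GIOW accepts the product of two distinct Frostman measures with no substantive change in the numerology. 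This yields $\|\nu_{p}\|_{L^{2}}<\infty$ on a $\mu$-positive set, producing a pin $p=\iota(\bar{x})$ with $\bar{x}\in\bar{E}$; setting $x=(\bar{x},|\bar{x}|^{2})\in E$ finishes the proof via the first paragraph. The principal obstacle is precisely this bipartite upgrade: the extension estimate on the light cone in $\R^{3}$ and the wave-packet/good-set decomposition in the refined Strichartz step must be carried out against a pair $(\mu,\nu)$ rather than a single measure. Since $\iota$ is bi-Lipschitz on the relevant annulus, however, energy and Fourier-decay hypotheses transfer between $\bar{E}$ and $A$ with bounded loss, so the required modifications to GIOW amount to careful bookkeeping rather than new harmonic-analytic input.
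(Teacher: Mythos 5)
Your proposal is correct and follows essentially the same route as the paper: reduce $\Pi_x^1(E)$ to a planar pinned distance set via the inversion $\iota$, restrict to an annulus to make $\iota$ bi-Lipschitz so dimensions are preserved, and invoke a bipartite pinned Falconer theorem in $\R^2$ at threshold $\tfrac{5}{4}$. The bipartite upgrade of Guth--Iosevich--Ou--Wang that you flag as the ``principal obstacle'' is exactly Proposition~\ref{prop_outaylor}, which the paper cites from Ou--Taylor, so you could simply quote that result rather than sketch how to rework the wave-packet and refined-Strichartz steps.
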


It is worth noting that, in general, one can not go lower than $\frac{d-1}{2}$. A construction will be provided in Section \ref{sharpness}. If we want to say something about the Hausdorff dimension of the pinned product sets or the property of having a non-empty interior, the next theorem provides some dimensional thresholds in this direction. 
\begin{theorem}\label{thm1.3}
Let $E\subset \R^d$ be a compact set on $P_d$, $d\geq 3$. 
\begin{itemize}
    \item[(i)] If $\dim_H(E)>\frac{d^2-2d+d\beta}{2(d-1)}$ for any $\beta>0$, then there exists $x\in E$ such that 
    \begin{align*}
        \dim_H(\Pi_x^1(E))\geq \beta.
    \end{align*}
    \item[(ii)] If $\dim_H(E)>\frac{d^2}{2(d-1)}$, then there exists $x\in E$ such that
    \begin{align*}
        \Int(\Pi_x^1(E))\neq \emptyset,
    \end{align*}
\end{itemize}
where $\Int(A)$ denotes the interior of $A$.
\end{theorem}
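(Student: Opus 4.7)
\emph{Reduction to a pinned distance problem in $\R^{d-1}$.} Using the identity from the introduction in squared form,
\[ a\cdot b + \tfrac14 = |\overline a|^2 \left| \tau(\overline a) - \overline b \right|^2, \qquad \tau(y) := \frac{-y}{2|y|^2}, \]
valid for $a,b \in P_d$, the pinned product set $\Pi_a^1(E)$ is a smooth bi-Lipschitz image of the pinned squared distance set $\{|\tau(\overline a) - \overline b|^2 : \overline b \in F\}$, where $F := \overline E \subset \R^{d-1}$. Since the paraboloid projection is bi-Lipschitz (so $\dim_H F = \dim_H E$), and after dyadic pigeonholing we may restrict $F$ to a region bounded away from the origin where $\tau$ is a diffeomorphism, it suffices to prove the analogues of (i) and (ii) for the pinned distance set $\Delta_x(F) := \{|x-y| : y\in F\}$, with the pin $x$ lying in $F' := \tau(F) \subset \R^{d-1}$.

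\emph{Fourier setup and main estimate.} Fix $s < \dim_H E$ close to the supremum. Let $\mu$ be a Frostman measure on $F$ of exponent $s$, and $\mu' = \tau_*\mu$ the corresponding $s$-Frostman measure on $F'$. The pinned distance measure $\nu_x$ on $\R_+$, defined by $\int f\,d\nu_x = \int f(|x-y|)\,d\mu(y)$, has one-dimensional Fourier transform $\widehat{\nu_x}(t) = \int e^{-2\pi i t|x-y|}\,d\mu(y)$. Using the Bessel asymptotic
\[ e^{-2\pi i t|z|} = c_{d-1}\,|t|^{-(d-2)/2}\int_{S^{d-2}} e^{-2\pi i t z\cdot\omega}\,d\sigma(\omega) + O\bigl(|t|^{-d/2}\bigr), \]
one obtains
\[ \int |\widehat{\nu_x}(t)|^2\,d\mu'(x) \;\approx\; |t|^{-(d-2)} \iint_{S^{d-2}\times S^{d-2}} \widehat{\mu'}\bigl(t(\omega-\omega')\bigr)\,\widehat\mu(t\omega)\,\overline{\widehat\mu(t\omega')}\,d\sigma(\omega)\,d\sigma(\omega'). \]
Applying Cauchy--Schwarz in $(\omega,\omega')$ and Mattila's spherical average decay $\int_{S^{d-2}} |\widehat\mu(t\omega)|^2\,d\sigma(\omega) \lesssim t^{-s}$ (valid for $s$ in the appropriate range), together with the analogous bound for $\mu'$, yields that $\int_1^\infty\int |\widehat{\nu_x}(t)|^2\,|t|^\gamma\,d\mu'(x)\,dt$ is finite whenever $s > \frac{d(d-1+\gamma)}{2(d-1)}$.

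\emph{Conclusion and main obstacle.} For (i), setting $\gamma = \beta - 1$ gives the precise threshold $s > \frac{d(d-2+\beta)}{2(d-1)}$ of the hypothesis; by Fubini, there is a pin $x_0 \in F'$ for which $\int_{|t|\geq 1} |\widehat{\nu_{x_0}}(t)|^2\,|t|^{\beta-1}\,dt < \infty$, and together with the trivial bound for $|t|\leq 1$ this gives finite $\beta$-energy of $\nu_{x_0}$, hence $\dim_H(\operatorname{supp}\nu_{x_0})\geq\beta$. The pin corresponds to some $a_0\in E$ via $x_0=\tau(\overline{a_0})$, yielding $\dim_H(\Pi_{a_0}^1(E))\geq\beta$. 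For (ii), the same estimate with $\gamma = 1+\varepsilon$ and hypothesis $s > \frac{d^2}{2(d-1)}$ produces a pin $x$ with $\int |\widehat{\nu_x}(t)|^2(1+|t|)^{1+\varepsilon}\,dt < \infty$; Cauchy--Schwarz then gives $\widehat{\nu_x} \in L^1(\R)$, hence $\nu_x \in C_0(\R)$, so being nontrivial, $\nu_x$ is positive on an open interval, forcing $\Int(\Pi_{a_0}^1(E))\neq\emptyset$. The principal obstacle is executing the spherical $L^2$ estimate on the triple integral to pin down the exact exponent $\frac{d(d-1+\gamma)}{2(d-1)}$: one must balance Cauchy--Schwarz in the $(\omega,\omega')$ integration against Mattila's spherical decay for $\mu$ and $\mu'$, while tracking how the Frostman constants and Fourier decay of $\mu'=\tau_*\mu$ are controlled by those of $\mu$ through the nonlinear inversion $\tau$.
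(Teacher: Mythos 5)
You correctly set up the reduction: projecting $E$ to $\R^{d-1}$ and sending the pins through $\tau$ turns the pinned dot-product set into a bi-Lipschitz image of a pinned (squared) distance set in $\R^{d-1}$. This is precisely the Section~\ref{sec_connection} connection the paper uses, together with the dimension identity \eqref{eq_equal_dim} for part~(i).

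At that point, however, the paper simply invokes Proposition~\ref{prop_IosevichLiu19} (Iosevich--Liu, \cite{IosevichLiu19}) in dimension $n=d-1$, solving $\dim_H(E')+\frac{d-2}{d}\dim_H(F')>d-2+\beta$ (resp.\ $>d$) with $\dim_H(E')=\dim_H(F')=s$ to get the two thresholds. You instead try to re-derive that pinned-distance input by hand via a Mattila-integral-type computation, and that is where the proposal has a genuine gap. The spherical average decay you invoke, $\int_{S^{d-2}}|\widehat\mu(t\omega)|^2\,d\sigma\lesssim t^{-s}$, holds only in the Mattila range $s\le\frac{d-2}{2}$; both hypotheses of the theorem force $s>\frac{d(d-2)}{2(d-1)}>\frac{d-2}{2}$, so in the relevant regime the correct decay exponent $\beta(s)$ is strictly smaller than $s$ and your arithmetic would not close. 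More fundamentally, bounding the triple integral $\iint\widehat{\mu'}\bigl(t(\omega-\omega')\bigr)\widehat\mu(t\omega)\overline{\widehat\mu(t\omega')}\,d\sigma\,d\sigma'$ by Cauchy--Schwarz plus spherical averages does not by itself produce the $\tfrac{n-1}{n+1}$ interaction coefficient appearing in Proposition~\ref{prop_IosevichLiu19}; in Iosevich--Liu that coefficient comes from a slicing-measure decomposition combined with sharp local smoothing estimates, a different mechanism from the one you sketch. You flag this yourself as the ``principal obstacle,'' and it is exactly that: without that estimate the proof is incomplete. The clean route, as the paper takes, is to cite Proposition~\ref{prop_IosevichLiu19} as a black box rather than re-prove it.
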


\subsection{Pinned simplex problem}
\begin{definition}
Let $d\geq 2$ and $1\leq k\leq d$. Let $E\subset \R^d$, we define the set of distinct congruent $k$-simplices with vertices in $E$ to be
\begin{align*}
    T_k(E):=E^{k+1}/\sim,
\end{align*}
where $(x^1,\dots, x^{k+1})\sim (y^1,\dots, y^{k+1})$ if and only if there exist an element $g$ of the orthogonal group $O(d)$ and a translation $z\in \R^d$ such that
\begin{align*}
    y^j=gx^j+z,\quad \forall \, 1\leq j\leq k+1.
\end{align*}
\end{definition}
Note that we may view $T_k(E)$ as a subset of $\R^{\binom{k+1}{2}}$ via the map 
\begin{align*}
    [(x^1,\dots, x^{k+1})]\mapsto \big(\vert x^i-x^j\vert \big)_{1\leq i<j\leq k+1}.
\end{align*}
The question of finding the smallest dimensional threshold $\alpha$ such that the set $T_k(E)$ has positive Lebesgue measure whenever $\dim_H(E)>\alpha$ has received much attention during the two recent decades. When $k=1$, this is the Falconer distance problem, and the best current dimensional thresholds are due to Guth, Iosevich, Ou, and Wang \cite{GIOW} in two dimensions with $\frac{5}{4}$, Du, Iosevich, Ou, Wang and Zhang \cite{DIOWZ} in higher even dimensions with $\frac{d}{2}+\frac{1}{4}$, and Du and Zhang \cite{DZ} in odd dimensions with $\frac{d}{2}+\frac{d}{4d-2}$.

When $k>1$, we have three following results, where the first is due to Erdo\u{g}an, Hart, and Iosevich \cite{EHI}, the second is due to Grafakos, Greenleaf, Iosevich and Palsson in \cite{Grafakosetal}, and the third is due to Greenleaf, Iosevich, Liu, and Palsson in \cite{GILP}.

\begin{theorem}[pinned simplex, \cite{EHI}]
Let $E\subset \mathbb{R}^d$ be a compact set. Assume that 
\[\dim_H(E)>\frac{d+k+1}{2},\]
then there exists $x\in E$ such that the set of $k$-simplices pinned at $x$ has positive Lebesgue measure. 
\end{theorem}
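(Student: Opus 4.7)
My plan is to pass to a Frostman measure on $E$, define a natural pinned simplex measure on the distance-tuple space, and reduce the statement to an $L^{2}$ bound that can be handled by Fourier-analytic spherical averages.

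Since $\dim_H(E) > \frac{d+k+1}{2}$, Frostman's lemma yields a Borel probability measure $\mu$ supported on $E$ whose $s$-energy $I_s(\mu)$ is finite for some $s > \frac{d+k+1}{2}$. Writing $N := \binom{k+1}{2}$, for each $x \in \mathbb{R}^d$ I define the pinned simplex measure $\nu_x$ on $\mathbb{R}^{N}$ as the pushforward of $\mu^{\otimes k}$ under the map
\[
(x^2,\dots,x^{k+1}) \ \longmapsto\ \bigl(|x^i - x^j|\bigr)_{1\le i<j\le k+1},\qquad x^1 := x.
\]
This $\nu_x$ is supported on the distance-tuple image of the set of $k$-simplices pinned at $x$, so if I can find some $x \in E$ for which $\nu_x$ admits an $L^2$ density, then that image (and hence the pinned simplex set) must have positive Lebesgue measure.

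To produce such an $x$, I average $\|\nu_x\|_{L^2}^2$ against $d\mu(x)$. Squaring $\nu_x$ and integrating out the $N$ distance variables turns the average into
\[
J \ :=\ \int \prod_{1\le i<j\le k+1} \delta\bigl(|x^i - x^j| - |y^i - y^j|\bigr)\, d\mu(x^1) \prod_{j=2}^{k+1} d\mu(x^j)\, d\mu(y^j),
\]
with the convention $y^1 = x^1$. Expanding each delta factor via Plancherel produces a spherical Fourier representation of $J$ in terms of $\hat{\mu}$ sampled on spheres of $N$ radii. I would then bound $J$ by iterated Cauchy--Schwarz together with a Mattila-type spherical average estimate of the schematic form $\int_{S^{d-1}} |\hat{\mu}(r\omega)|^2 \, d\sigma(\omega) \lesssim I_s(\mu)\, r^{-\beta(s)}$, unwinding the constraints one vertex at a time.

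The main obstacle is precisely this Fourier bookkeeping. The $N$ delta constraints share vertices, so a naive Cauchy--Schwarz over-counts and degrades the threshold. One must instead separate the $k$ constraints anchored at the pin $x^1$ (which are handled by the pinned Falconer machinery and account for $s > \frac{d+2}{2}$) from the $\binom{k}{2}$ constraints between non-pin vertices (each of which, after a careful pairing and application of Plancherel, costs an additional $\frac{1}{2}$ in the required exponent); a careful accumulation recovers exactly the hypothesis $s > \frac{d+k+1}{2}$. Once $J<\infty$ is verified, Fubini yields $\|\nu_x\|_{L^2} < \infty$ for $\mu$-a.e.\ $x\in E$; for any such $x$, $\nu_x$ is absolutely continuous with respect to Lebesgue measure on $\mathbb{R}^N$, and its support -- the distance-tuple image of the pinned simplex set at $x$ -- therefore has positive Lebesgue measure, as claimed.
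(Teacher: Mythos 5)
Your proposal sets up a one-shot Mattila-type integral: you push forward $\mu^{\otimes k}$ under the \emph{full} distance-tuple map (all $\binom{k+1}{2}$ coordinates at once), and aim to bound $J := \int \|\nu_x\|_{L^2}^2\,d\mu(x)$ via Plancherel and spherical averages. The paper does not reprove the EHI theorem, but the strategy it uses for its improvement (Theorem \ref{thm_distance_simplex}) is genuinely different: it treats only $k$ ``pin-type'' constraints at a time (the measure $\eta_{y'}$ supported on $\Delta^k_{y'}$, Lemma \ref{lem_kpinned_distances}) and then, in Claim \ref{claim_conditionEF}, builds the simplex vertex by vertex via a pigeonhole-plus-restriction induction. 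There is no attempt to control all $\binom{k+1}{2}$ correlations in a single Mattila integral. That iterative structure is what keeps the incremental cost linear in $k$.

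This contrast exposes the real gap in your proposal. You correctly identify the ``Fourier bookkeeping'' as the crux and then do not carry it out, offering instead the heuristic: $k$ pin constraints account for $s > (d+2)/2$, and each of the $\binom{k}{2}$ non-pin constraints costs an additional $\tfrac{1}{2}$. Tallying this gives a required threshold of
\[
\frac{d+2}{2} + \frac{1}{2}\binom{k}{2} = \frac{d+k+1}{2} + \frac{(k-1)(k-2)}{4},
\]
which agrees with $\frac{d+k+1}{2}$ only for $k\le 2$. For $k\ge 3$ your own accounting demands a strictly larger exponent than the theorem asserts, so the sketch as written does not prove the statement. To hit $\frac{d+k+1}{2}$ one needs the cost to grow linearly in $k$, not quadratically; this is exactly what iterating a $k$-star estimate (or, as in EHI, organizing the argument so that each additional vertex rather than each additional edge contributes $\tfrac12$) achieves. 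You would need to replace the ``per-constraint'' heuristic with a concrete mechanism for the non-pin constraints to share decay --- or abandon the one-shot $J$ and set up the induction --- before this can be considered a proof.
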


\begin{theorem}[\cite{Grafakosetal}]
Let $E\subset \mathbb{R}^d$ be a compact set. Assume that 
\[\dim_H(E)>d-\frac{d-1}{2k},\]
then $\mathcal{L}^{\binom{k+1}{2}}(T_k(E))>0$.
\end{theorem}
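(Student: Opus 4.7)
The plan is to construct a natural ``simplex counting'' measure $\nu$ on $T_k(E)$ and to show that $\nu$ is absolutely continuous with an $L^2$ density with respect to Lebesgue measure on $\R^{\binom{k+1}{2}}$, which immediately yields $\mathcal{L}^{\binom{k+1}{2}}(T_k(E))>0$. Concretely, fix a Frostman measure $\mu$ on $E$ of some dimension $s$ strictly larger than $d-\frac{d-1}{2k}$, and for $t=(t_{ij})_{1\le i<j\le k+1}$ define
\begin{equation*}
\nu(t):=\int_{E^{k+1}} \prod_{1\le i<j\le k+1}\delta\bigl(|x^i-x^j|-t_{ij}\bigr)\,d\mu(x^1)\cdots d\mu(x^{k+1}),
\end{equation*}
interpreted via smooth thickenings of the spheres. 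Up to Jacobian factors (which are bounded away from zero and infinity on nondegenerate simplices), this is exactly the pushforward of $\mu^{\otimes(k+1)}$ under the map sending a $(k+1)$-tuple to its distance matrix, so a uniform $L^2$ bound on $\nu$ forces the distance-matrix image set to have positive Lebesgue measure.

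The main analytic step is to bound $\int \nu(t)^2\,dt$. Applying Plancherel in each $t_{ij}$ converts this into a multilinear integral against $\mu$ involving products of (smoothed) spherical surface measures. I would then integrate one vertex at a time, treating the argument as an induction in which the $(j+1)$-th vertex must lie on the intersection of $j$ spheres centered at the previously chosen vertices; generically this intersection is a sphere of dimension $d-j$. The quantitative tool is a Wolff/Erdo\u{g}an-type spherical $L^2$ estimate, schematically of the form
\begin{equation*}
\int_{S^{d-1}}|\widehat{\mu}(r\theta)|^2\,d\sigma(\theta)\lesssim r^{-\beta(s)},
\end{equation*}
with a decay rate $\beta(s)$ that improves as $s$ increases. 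Iterating this estimate across the $k$ peelings and summing the resulting dyadic geometric series in the frequency variable produces convergence precisely under the hypothesis $s>d-\frac{d-1}{2k}$: the $d-1$ is the codimension of each sphere, the $k$ counts the iterations, and the factor $\frac{1}{2}$ reflects the $L^2$ nature of the spherical bound.

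The main obstacle, as in all multilinear Falconer-type problems, is controlling the contribution of near-degenerate configurations, where the $k$ spheres intersect in lower than the generic dimension and the Jacobian of the distance map collapses. I would handle this via a Littlewood--Paley/dyadic decomposition of $\mu$ combined with the Frostman bound $\mu(B(x,r))\lesssim r^s$, which provides enough additional mass control to offset the loss of transversality on the bad set. A secondary technical point is verifying that the smooth thickenings used to define $\nu$ converge weakly to a well-defined measure on the quotient $T_k(E)$ with the claimed $L^2$ density; this is a standard but bookkeeping-heavy limiting argument. Once these two issues are dispatched, the main estimate combined with the standard Sobolev-type inequality relating $L^2$ Fourier bounds to positive Lebesgue measure of the support completes the proof.
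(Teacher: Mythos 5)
The paper does not actually prove this theorem: it is stated as a cited background result of Grafakos, Greenleaf, Iosevich, and Palsson, so there is no ``paper's own proof'' to compare against line by line. I will therefore assess the proposal on its own terms against what the argument in \cite{Grafakosetal} actually requires.

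Your high-level framework is correct and standard: take a Frostman measure $\mu$ on $E$, push $\mu^{\otimes(k+1)}$ forward under the distance-matrix map to get a measure $\nu$ on $\R^{\binom{k+1}{2}}$, and show $\nu\in L^2$; positivity of $\mathcal{L}^{\binom{k+1}{2}}(T_k(E))$ then follows from Cauchy--Schwarz exactly as in the pinned argument in Section \ref{sec_proof5}. The real content, however, is the $L^2$ bound, and here the proposal has two concrete problems. First, the quantitative tool you name is the wrong one. The Wolff--Erdo\u{g}an spherical average estimate has decay exponent $\beta(s)=\tfrac{d+2s-2}{4}$, which is strictly better than $\tfrac{d-1}{2}$ in the relevant range; if your iteration were literally built on it, you would land below $d-\tfrac{d-1}{2k}$, not at it. The threshold $d-\tfrac{d-1}{2k}$ (which at $k=1$ recovers Falconer's classical $\tfrac{d+1}{2}$) is the one that comes out of the \emph{universal} stationary-phase decay $|\widehat{\sigma}(\xi)|\lesssim|\xi|^{-(d-1)/2}$ of the spherical surface measure, equivalently the $L^2\to L^2_{(d-1)/2}$ Sobolev smoothing of the spherical averaging operator. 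The factor ``$\tfrac{1}{2}$ from the $L^2$ nature'' in your arithmetic is precisely this Sjölin/stationary-phase gain; attributing it to Wolff--Erdo\u{g}an muddles what is driving the estimate. Second, and more seriously, the step you describe as ``integrate one vertex at a time, generically the intersection of $j$ spheres is a $(d-j)$-sphere, iterate the estimate, and handle degeneracy by Littlewood--Paley and the Frostman bound'' is exactly the hard part, and it is not a routine iteration of a single spherical bound. The lower-dimensional spheres that appear after several peelings do \emph{not} satisfy the same $(d-1)/2$ decay --- a generic $(d-j)$-dimensional sphere has Fourier decay of order $(d-j)/2$, not $(d-1)/2$ --- so the gain does not simply accumulate as your heuristic suggests, and controlling the conditional geometry (transversality of the intersecting spheres, and the blowup of the Jacobian near degenerate simplices) is precisely what the $L^2$-Sobolev boundedness theorem for multilinear generalized Radon transforms in \cite{Grafakosetal} is designed to do. Invoking ``Littlewood--Paley plus Frostman'' does not substitute for that theorem; the degeneracy set is a real codimension-one obstruction and it is not obvious how the Frostman condition alone offsets the loss. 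In short: the scaffolding is right, the claimed arithmetic coincidentally matches the target threshold, but the proposal replaces the actual engine of the proof (an inductive multilinear Radon--transform Sobolev estimate) with a misattributed single-step spherical-average bound and an unsubstantiated plan for the degenerate set, so as written this does not constitute a proof.
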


\begin{theorem}[\cite{GILP}]
Let $E\subset \mathbb{R}^d$ be a compact set. Assume that 
\[\dim_H(E)>\frac{dk+1}{k+1},\]
then $\mathcal{L}^{\binom{k+1}{2}}(T_k(E))>0$.
\end{theorem}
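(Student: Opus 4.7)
\emph{My plan.} Following a Mattila–group-theoretic viewpoint, fix an exponent $s$ with $\dim_H(E) > s > \frac{dk+1}{k+1}$ and take a Frostman measure $\mu$ on $E$ satisfying $\mu(B(x,r)) \lesssim r^s$. Define the pairwise-distance map $\Phi(x^1,\dots,x^{k+1}) = (|x^i-x^j|)_{1 \le i<j \le k+1}$ and set $\nu_k := \Phi_*(\mu^{\otimes(k+1)})$, a measure on $\R^{\binom{k+1}{2}}$. The goal is to prove $\nu_k \in L^2$, which forces $\mathrm{supp}\,\nu_k \subseteq T_k(E)$ to have positive Lebesgue measure by Cauchy--Schwarz.

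\emph{Key step.} Two tuples in $(\R^d)^{k+1}$ share the same image under $\Phi$ iff they lie in the same orbit of the Euclidean group $G(d) := O(d) \ltimes \R^d$. On the non-degenerate locus (tuples spanning a full $k$-plane), $\Phi$ is a smooth submersion whose fiber is diffeomorphic to $G(d)$, so unfolding the $L^2$ norm and parametrizing the fiber by $g \in G(d)$ gives
\[
\|\nu_k\|_{L^2}^2 \;\approx\; \int_{G(d)} \left[\int_{E^{k+1}} \prod_{j=1}^{k+1} d\mu(x^j)\,d\mu(gx^j)\right] J(g)^{-1}\, dg,
\]
where $J$ is the Jacobian of $\Phi$, bounded away from zero on non-degenerate configurations. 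The inner integral factorises as $\prod_{j=1}^{k+1} \mathcal{P}(g)$ with $\mathcal{P}(g) := \int d\mu(x)\,d\mu(gx)$. Writing $g = (R,v)$ with $R \in O(d)$ and $v \in \R^d$, the $v$-integration is a convolution in $\mu$ and the $R$-integration reduces, via Plancherel and Mattila's $L^2$ spherical-average bound for Frostman measures of exponent $s$, to a product of weighted $|\hat\mu|^2$ integrals. Balancing the $(k+1)$-fold frequency product against the orbit dimension $d + \binom{d}{2} - \binom{d-k}{2}$ of a non-degenerate $k$-simplex yields finiteness of $\|\nu_k\|_{L^2}^2$ in precisely the regime $s > \frac{dk+1}{k+1}$.

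\emph{Main obstacle.} The hardest part is the degenerate stratum, where the $k+1$ points fail to span a $k$-plane: there $J^{-1}$ blows up and the pairing $\mathcal{P}(g)$ becomes singular. To handle it I would show that the $\mu^{\otimes(k+1)}$-mass of an $\varepsilon$-neighborhood of the rank-deficient locus decays like a positive power of $\varepsilon$ exactly when $s > \frac{dk+1}{k+1}$, by iterating the Frostman condition against $\varepsilon$-tubes of affine $(k-1)$-flats and summing over scales. A secondary technical issue is the weakening of the spherical-average estimate near $R = I$ in $O(d)$; a stratification of $O(d)$ by the rank of $R - I$, with separate estimates on each stratum matched to the product structure in the $k+1$ copies of $\mathcal{P}(g)$, should close the argument and recover the sharp exponent $\frac{dk+1}{k+1}$, which reduces to Erdo\u{g}an's $\frac{d+1}{2}$ threshold in the Falconer base case $k=1$.
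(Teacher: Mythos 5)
First, a clarification: this theorem is cited from \cite{GILP} as background and is \emph{not} proved in the present paper, so there is no ``paper's proof'' to line your argument up against. What follows is therefore an assessment of your sketch as an attempt to reconstruct the Greenleaf--Iosevich--Liu--Palsson argument.

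Your overall plan --- define $\nu_k=\Phi_\ast\mu^{\otimes(k+1)}$, prove $\nu_k\in L^2$, and unfold the $L^2$ norm over the Euclidean motion group, reducing to Mattila-type spherical averages --- is exactly the viewpoint of \cite{GILP}, and you correctly identify both the orbit-dimension count $d+\binom{d}{2}-\binom{d-k}{2}$ and the degenerate stratum as the sensitive points. However, the central display is stated in a way that would not survive being made rigorous. The quantity $\mathcal{P}(g)=\int d\mu(x)\,d\mu(gx)$ is not a well-defined integral when $\mu$ is a singular Frostman measure (for $g=\mathrm{id}$ it is formally $\int d\mu\,d\mu$ along the diagonal, which is $+\infty$); it must be introduced as a limit of mollified densities $\mathcal{P}_\epsilon(g)=\epsilon^{-d}(\mu\times\mu)\{|y-gx|<\epsilon\}$, and controlling this limit uniformly in $g$ is precisely the content of the theorem, not a free input. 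Relatedly, the factor $J(g)^{-1}$ is misplaced: the Jacobian of $\Phi$ is a function of the configuration $x\in(\R^d)^{k+1}$, constant along each $G(d)$-orbit but genuinely varying with $\Phi(x)$, not a function on $G(d)$; and the parametrization of the fiber $\Phi^{-1}(\Phi(x))$ by the full group $G(d)$ over-counts by the stabilizer $O(d-k)$, which your formula does not account for. These are not cosmetic: getting the correct coarea/stabilizer normalisation is what makes the $\epsilon$-powers cancel and yields the clean Mattila-type integral.

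The second real gap is that the final balancing --- ``the $(k+1)$-fold frequency product against the orbit dimension yields $s>\frac{dk+1}{k+1}$'' --- is asserted rather than carried out. For $k=1$ the computation does close (the $v$-integration by Plancherel gives $\int_{O(d)}\int|\widehat\mu(\xi)|^2|\widehat\mu(R\xi)|^2\,d\xi\,dR$, which collapses to the classical Mattila integral $\int r^{d-1}\sigma(r)^2\,dr$; using Mattila's bound $\sigma(r)\lesssim r^{-(d-1)/2}$ for $s\ge\frac{d-1}{2}$ together with the energy integral gives $s>\frac{d+1}{2}$). But for $k\ge 2$ the $v$-integration of $\mathcal{P}(R,v)^{k+1}$ is no longer a Plancherel identity, and it is not clear from your sketch how the spherical-average estimate and the $(k+1)$-fold product interact to produce exactly $\frac{dk+1}{k+1}$; this is where the group-theoretic structure in \cite{GILP} does genuine work, and it needs to be written out. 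Your proposed treatment of the degenerate stratum (decaying $\mu^{\otimes(k+1)}$-mass of $\varepsilon$-tubes of $(k-1)$-flats) is plausible but again unverified, and the side remark that the base case recovers ``Erdo\u{g}an's $\frac{d+1}{2}$'' is slightly misattributed: $\frac{d+1}{2}$ corresponds to Mattila's $\beta=\frac{d-1}{2}$ bound, whereas Erdo\u{g}an's refinement $\beta=\frac{d+2s-2}{4}$ actually gives the stronger Falconer threshold $\frac{d}{2}+\frac{1}{3}$, which the GILP exponent does not match even at $k=1$.
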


In this paper, we are able to establish the following improvement in which the ideas described above play a crucial role. 
\begin{theorem}[pinned simplex]\label{thm_distance_simplex}
Let $E\subset \mathbb{R}^d$ be a compact set. Assume that 
\[\dim_H(E)>\frac{d+k}{2},\]
then there exists $x\in E$ such that the set of $k$-simplices pinned at $x$ has positive Lebesgue measure. 
\end{theorem}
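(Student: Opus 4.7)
The plan is to argue by induction on $k$. The base case $k=1$ is the pinned Falconer distance theorem of Erdogan-Hart-Iosevich, which gives, for any compact $E\subset\R^d$ with $\dim_H(E)>(d+1)/2$, some $x\in E$ whose pinned distance set has positive Lebesgue measure; this matches the threshold $(d+k)/2$ when $k=1$.

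For the inductive step, assume the result holds for pinned $(k-1)$-simplices in $\R^d$ with threshold $(d+k-1)/2$, and let $E\subset\R^d$ be compact with $\dim_H(E)>(d+k)/2$. Fix a Frostman measure $\mu$ on $E$ of exponent $s>(d+k)/2$. By the inductive hypothesis there exist $x\in E$ and, via Fubini, a $\mu^{\otimes(k-1)}$-positive family of tuples $(y_1,\dots,y_{k-1})\in E^{k-1}$ whose induced pinned $(k-1)$-simplex congruence classes fill a positive-$\mathcal{L}^{\binom{k}{2}}$ subset of $\R^{\binom{k}{2}}$. Since the locus of affinely dependent configurations has positive codimension, a standard stratification allows us to restrict to a positive-measure sub-family on which $(x,y_1,\dots,y_{k-1})$ is affinely independent with $(k-1)$-simplex volume bounded below.

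The core of the argument is a \emph{multi-pinned distance lemma}: for any $k$ affinely independent points $z_1,\dots,z_k\in\R^d$, the pushforward $\lambda$ of $\mu$ under $y\mapsto(|z_1-y|,\dots,|z_k-y|)\in\R^k$ satisfies $\|\lambda\|_{L^2}^2<\infty$ whenever $s>(d+k)/2$, with bound uniform in a lower bound on the simplex volume spanned by $z_1,\dots,z_k$. Granted this lemma, applying it to each non-degenerate configuration $(z_1,\dots,z_k)=(x,y_1,\dots,y_{k-1})$ produces a positive-$\mathcal{L}^k$ image of $E$ under the multi-distance map $y_k\mapsto(|x-y_k|,|y_1-y_k|,\dots,|y_{k-1}-y_k|)$; combining this with the $(k-1)$-simplex measure already secured yields a positive $\mathcal{L}^{\binom{k+1}{2}}$ measure of pinned $k$-simplex classes, completing the induction.

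To prove the lemma, expand
\[
\|\lambda\|_{L^2}^2=\int\!\!\int\prod_{i=1}^k \delta\bigl(|z_i-y|-|z_i-y'|\bigr)\,d\mu(y)\,d\mu(y'),
\]
replace each $\delta$ by its Fourier representation through the spherical surface measure centered at $z_i$, and invoke Plancherel. The affine independence of $z_1,\dots,z_k$ enables a change of Fourier variables adapted to the simplex frame, combining the $k$ spherical decay kernels into a single radial weight whose integrability against $|\widehat\mu(\xi)|^2$ is equivalent, via the standard Mattila-type spherical energy estimate, to the Frostman condition $s>(d+k)/2$. The principal obstacle is making this frame-dependent change of variables quantitative and uniform over non-degenerate configurations; this is where the simplex-volume lower bound from paragraph two is essential, supplying a uniform Jacobian estimate. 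The saving of $1/2$ in dimension per additional vertex, relative to the classical Erdogan-Hart-Iosevich threshold $(d+k+1)/2$, is precisely the source of the improvement.
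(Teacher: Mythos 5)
Your proposal has a genuine gap at its core. The crux of your argument is the \emph{multi-pinned distance lemma}: that for $k$ affinely independent points $z_1,\dots,z_k$ the pushforward of a Frostman measure $\mu$ under the multi-distance map has finite $L^2$ density once $s>\frac{d+k}{2}$. You sketch a proof of this via Fourier representations of spherical $\delta$-functions, Plancherel, and a Mattila-type spherical energy estimate, and assert that the ``saving of $1/2$'' comes from a uniform Jacobian bound on non-degenerate configurations. But this is exactly the classical scheme of Erdo\u{g}an--Hart--Iosevich, and run directly in $\R^d$ it produces the threshold $\frac{d+k+1}{2}$, not $\frac{d+k}{2}$. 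The Jacobian coming from a frame change adapted to the simplex is a scale-invariant quantity; controlling it from below on a non-degenerate stratum gives constants, not a shift in the Frostman exponent. You never explain where the additional half-dimension is actually gained, and the sketch as written simply does not produce it.

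The paper's improvement comes from a completely different mechanism that your proposal does not invoke. The sets $E',F'\subset\R^d$ are lifted to the truncated paraboloid $P_{d+1}\subset\R^{d+1}$ via the (bi-Lipschitz) inverse maps $H_1^{-1},H_2^{-1}$ from Section \ref{sec_connection}. On the paraboloid, the identity \eqref{eq_xyt} converts the multi-pinned distance problem into a multi-pinned \emph{dot-product} problem, and Lemma \ref{lem_dotproduct} (the paraboloid analogue of \cite[Theorem 4]{EHI}) gives that the averaged $L^2$ norm of the dot-product density is finite once $s>\frac{n+k-1}{2}$ for sets on $P_n^\ast$. Taking $n=d+1$ yields $\frac{d+k}{2}$, and Lemma \ref{lem_equivalent_finitness_L2norm} transfers this back to the distance measures $\eta_{y'}$. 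The half-dimension is gained precisely because the paraboloid is a codimension-one surface: a set of Hausdorff dimension $s$ in $\R^d$ lifts to a set of the same dimension living on a $d$-dimensional variety inside $\R^{d+1}$, and the dot-product estimate on that surface is stronger than the bare distance estimate one dimension lower.

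There is a second, subtler problem: your multi-pinned distance lemma is stated \emph{pointwise} for any fixed affinely independent tuple $(z_1,\dots,z_k)$, and you then ``apply it to each non-degenerate configuration.'' The paper's Lemma \ref{lem_kpinned_distances} (and Lemma \ref{lem_dotproduct} before it) is only an \emph{averaged} statement: finiteness of $\int_{(F')^k}\int|\eta_{y'}(t)|^2\,dt\,d\mu_{F'}^k(y')$, hence finiteness for $\mu_{F'}^k$-a.e.\ tuple. The Mattila/Plancherel machinery you invoke produces exactly such averaged bounds; a pointwise version for arbitrary fixed pins is not what these arguments give, and would in any case be too strong. Because the statement is only almost-everywhere, the subsequent step of ``combining with the $(k-1)$-simplex measure already secured'' cannot be a simple product; one needs the Fubini and pigeonholing bookkeeping carried out in Claim \ref{claim_conditionEF} (the nested sets $E_l,F_l$ with disjoint supports, the sets $\mathcal{E}_l$ and $\mathcal{E}_{l,y'}$, and the observation that restricting $\eta_{y'}$ to a full-measure subset does not change its Fourier transform, \eqref{eq_equal_L2norm_simplex}). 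Your inductive scaffolding is in the right spirit---peeling off one vertex at a time---but the measure-theoretic care and, above all, the paraboloid lift are missing, and without the latter the stated threshold is not reachable by your method.
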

\begin{remark}
For comparison, the exponent $\frac{d+k}{2}$ in Theorem \ref{thm_distance_simplex} is clearly smaller than the exponent $\frac{d+k+1}{2}$ obtained in \cite{EHI}, is better than exponent $d-\frac{d-1}{2k}$ in \cite{Grafakosetal} for $k< d-1$, and is lower than exponent $\frac{dk+1}{k+1}$ in \cite{GILP} when $k< d-2$. For the remaining cases that $k\geq d-2$, the exponent in \cite{GILP} is still the best known. 
\end{remark}

\begin{remark} It is not difficult to see that for the $k$-simplex problem, the dimensional exponent $k-1$ cannot be exceeded since the underlying set $E$ may be contained in a copy of ${\Bbb R}^{k-1}$ where a non-degenerate $k$-simplex cannot be constructed. 
\end{remark} 

\textbf{Notation.} Throughout the paper, we denote $X\lesssim Y$ if $X\leq CY$ for some constant $C>0$, and $X\approx Y$ iff $X\lesssim Y$ and $Y\lesssim X$. Given a set $A\subset \R^n$, its $n$-dimensional Lebesgue measure is denoted by $\mathcal{L}^{n}(A)$. We also denote $\R^\ast=\{x\in \R: x\neq 0\}$ as the set of nonzero real numbers. For a measure $\lambda$ and a map $f$, we denote $f_\sharp \mu$ as the pushforward of $\mu$ under the map $f$. For a set $A$, $C_0(A)$ will be used to denote the set of continuous functions which supported on $A$, and $C_0^\infty(A)$ is the subset of $C_0(A)$ which contains all smooth functions.

The paper is organized as follows: In Section \ref{sec_preliminaries}, we recall some definitions and results which are needed for later proofs. We establish a connection between product of sets on the paraboloid $P_d$ and the pinned simplex problem in Section \ref{sec_connection}. We prove Theorems \ref{thm1.1}, \ref{thm1.2}, and \ref{thm1.3} in Section \ref{sec_proof123}. Proof of Theorem \ref{thm_distance_simplex} (pinned simplex) will be presented in Section \ref{sec_proof5}. The last section is devoted for constructions and discussions. 

\vskip.125in 

\subsection{Acknowledgements} The first listed author was supported in part by the National Science Foundation grants no. HDR TRIPODS - 1934962 and NSF DMS 2154232. The fourth listed author Chun-Yen was supported in part by MOST, through grant 108-2628-M-002-010-MY4.

\vskip.125in 

\section{Preliminaries}\label{sec_preliminaries}
In this section, we recall some notation and results about Frostman measures, $L^2$ spherical averages, and pinned distance set which are need for our proofs in later sections. 
\subsection{Frostman measures and spherical averages}
\begin{lemma}[Frostman's lemma, Theorem 2.7, \cite{Mattilabook15}]
Let $0\leq s\leq n$. For a Borel set $E\subset \R^n$, the $s-$dimensional Hausdorff measure of $E$ is positive if and only if there exists a probability measure $\mu$ on $E$ such that
\begin{align*}
    \mu(B(x,r))\lesssim r^s, \quad \forall\, x\in \R^n,\,\,r>0.
\end{align*}
\end{lemma}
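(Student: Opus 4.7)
The plan is to treat the two implications separately. The ``if'' direction is essentially a covering argument: given a probability measure $\mu$ on $E$ with $\mu(B(x,r)) \le C r^s$, any countable cover $\{U_i\}$ of $E$ by sets of diameter at most $\delta$ satisfies
\[1 = \mu(E) \le \sum_i \mu(U_i) \le \sum_i \mu\bigl(B(x_i, \mathrm{diam}(U_i))\bigr) \le C \sum_i (\mathrm{diam}\, U_i)^s,\]
where $x_i$ is any point of $U_i$. Taking the infimum over such covers gives $\mathcal{H}^s_\delta(E) \ge 1/C$, and letting $\delta \to 0$ yields $\mathcal{H}^s(E) \ge 1/C > 0$.

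For the harder ``only if'' direction, I first reduce to the case that $E$ is a compact set with positive Hausdorff $s$-content $\mathcal{H}^s_\infty(E) > 0$. Inner regularity of $\mathcal{H}^s$ on Borel sets lets me pass to a compact subset of positive $\mathcal{H}^s$-measure, and $\mathcal{H}^s_\infty(E) > 0$ is equivalent to $\mathcal{H}^s(E) > 0$ for Borel sets. After rescaling, I may assume $E \subset [0,1]^n$.

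The main construction is the standard dyadic mass-balancing scheme. For each large integer $N$, begin by placing mass $2^{-Ns}$ (distributed, say, uniformly with respect to Lebesgue measure) on every dyadic cube of side $2^{-N}$ meeting $E$; call the resulting measure $\mu_N^{(N)}$. Then sweep from the finest scale to the coarsest: having produced $\mu_N^{(k+1)}$, obtain $\mu_N^{(k)}$ by leaving the mass inside each dyadic cube $Q$ of side $2^{-k}$ untouched if $\mu_N^{(k+1)}(Q) \le 2^{-ks}$, and otherwise rescaling it by the factor $2^{-ks}/\mu_N^{(k+1)}(Q)$. The output $\mu_N := \mu_N^{(0)}$ then satisfies $\mu_N(Q) \le 2^{-ks}$ for every dyadic cube of side $2^{-k}$ with $0 \le k \le N$; since any ball of radius $r$ is contained in a bounded number of dyadic cubes of side comparable to $r$, this yields the Frostman-type bound $\mu_N(B(x,r)) \lesssim r^s$ uniformly in $N$.

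The main obstacle is producing a uniform positive lower bound for the total mass $\mu_N(\R^n)$. The key observation is that one can partition the level-$N$ cubes meeting $E$ according to their \emph{saturated} dyadic ancestor, namely the largest ancestor whose mass, after the balancing sweep, equals exactly $2^{-ks}$. These saturated ancestors are pairwise disjoint and together cover $E$, so
\[\mu_N(\R^n) = \sum_{Q \text{ saturated}} 2^{-n_Q s} \gtrsim \mathcal{H}^s_\infty(E),\]
uniformly in $N$. A weak-$*$ subsequential limit $\mu$ of $\{\mu_N\}$ then inherits the Frostman bound (the constant may degrade by a bounded factor, absorbed into $\lesssim$) and is nontrivial by the mass lower bound; normalizing and noting $\mathrm{supp}\,\mu \subset E$ produces the desired probability measure.
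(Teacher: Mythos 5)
The paper does not give a proof of this lemma at all; it simply cites Theorem~2.7 of Mattila's book \cite{Mattilabook15}. Your proposal is, in outline, the standard textbook proof of Frostman's lemma: the ``if'' direction via the covering estimate $1 = \mu(E) \le C\sum_i(\mathrm{diam}\,U_i)^s$, and the ``only if'' direction via the dyadic mass-balancing (or ``mass distribution'') construction for a compact set, with the lower bound on total mass coming from the pairwise-disjoint saturated ancestors and the content $\mathcal{H}^s_\infty(E)$, followed by a weak-$*$ compactness argument. That is exactly the argument one finds in Mattila.

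One genuine soft spot: in the ``only if'' direction you reduce to a compact subset of positive $\mathcal{H}^s$-measure by appealing to ``inner regularity of $\mathcal{H}^s$ on Borel sets.'' For $\mathcal{L}^n$ this is elementary, but for $\mathcal{H}^s$ on a Borel set that may have infinite $\mathcal{H}^s$-measure, the existence of a compact $K\subset E$ with $0<\mathcal{H}^s(K)<\infty$ is a nontrivial theorem (Besicovitch--Davies, with the analytic- and metric-space generality due to Davies and Howroyd). It is not a consequence of Borel regularity alone, and it is precisely the step that elevates the Borel-set version of Frostman's lemma above the compact-set version. You should cite it explicitly rather than treat it as routine regularity. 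With that caveat the rest of the argument is sound: the rescaling sweep indeed preserves the upper bound $\mu_N(Q)\le 2^{-ks}$ at all finer scales, the saturated ancestors are pairwise disjoint and cover $E$ so that $\mu_N(\R^n)\gtrsim\mathcal{H}^s_\infty(E)$ uniformly in $N$, a ball of radius $r$ meets $O(1)$ dyadic cubes of comparable side so the Frostman bound passes to $\mu_N$ and (after comparing with slightly larger balls) to the weak-$*$ limit $\mu$, and since $E$ is closed the supports $\mathrm{supp}\,\mu_N$ shrink to $E$, so $\mathrm{supp}\,\mu\subset E$.
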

In particular, Frostman's lemma implies that given any exponent $s_\mu< \dim_\mathcal{H}(E)$, there exists a probability measure $\mu$ on $E$ such that 
\begin{align}\label{eq_Frostman}
    \mu(B(x,r))\lesssim r^{s_\mu}, \quad \forall\, x\in \R^n, r>0.
\end{align}
A measure satisfying \eqref{eq_Frostman} is often called an $s_{\mu}$-Frostman measure.

Given a probability measure $\mu$ with compact support on $\R^n$, $n\geq 2$, we define the $L^2$ spherical averages of Fourier transform of $\mu$ by
\begin{align*}
    \int_{S^{n-1}}\vert \widehat{\mu}(r\omega)\vert^2d\omega,
\end{align*}
where $\omega$ is the surface measure on the unit sphere $S^{n-1}$. In \cite{Mattila87}, Mattila developed a machinery to study Falconer's distance problem, by reducing the orginial problem to the study of the decay rates of spherical averages of fractal measures, namely, the supremum of the numbers $\beta$ for which:
\begin{align}\label{eq_spherical_average}
    \int_{S^{n-1}}\vert \widehat{\mu}(r\omega)\vert^2d\omega \lesssim r^{-\beta},\quad r>1.
\end{align}
It is proved that for any $s$-Frostman measure $\mu$, \eqref{eq_spherical_average} holds with
$$
    \beta(s)
    :=\left\{
    \begin{array}{lll}
    s, &s\in \big(0,\frac{n-1}{2}\big], &(\text{Mattila \cite{Mattila87}})\\
    \frac{n-1}{2}, &s\in \big[\frac{n-1}{2},\frac{n}{2}\big], &(\text{Mattila \cite{Mattila87}})\\
    \frac{n+2s-2}{4}, &s\in \big[ \frac{n}{2},\frac{n+1}{2}\big], &(\text{Wolff \cite{Wolff} \text{ and } Erdo\u{g}an \cite{Erdogan05}})
    \end{array}
    \right..
$$
Recently, the estimate \eqref{eq_spherical_average} was improved by Du–Guth–
Ou–Wang–Wilson–Zhang \cite{DGOWWZ} when $n\geq 3$ and Du-Zhang \cite{DZ} when $n\geq 4$, namely,
\begin{align*}
    \beta(s)= \frac{(n-1)s}{n},\quad s\in \bigg[\frac{n}{2},\frac{n+1}{2}\bigg].
\end{align*}
When $s$ is large, see Luc\`{a} and Rogers \cite{LR19}.

\begin{remark} There are certain arithmetic condition that limit the exponents one can obtain for spherical averages. See, for example, \cite{IosevichRudnev07} and \cite{BBCRV07} and the references contained therein. \end{remark} 

\subsection{Pinned distance set problem}
Given $E\subset \R^n$, $n\geq 2$, and a point $x\in \R^n$, we define the pinned distance set of $E$ at $x$ by
\begin{align*}
    \Delta_x(E)=\{ \vert x-y\vert: y\in E\}.
\end{align*}
The pinned distance set problem is a more restrictive version of the Falconer problem, which asks for the smallest dimensional threshold of a given compact set $E$ such that there exists $x\in E$ with $\mathcal{L}^1(\Delta_x(E))>0$. This first investigation was made by Peres and Schlag in \cite{Peres}. In a recent work \cite{LiuL2}, Liu proved the following result for pinned distance set problem, using an $L^2$ identity and the estimates of the Fourier decay \eqref{eq_spherical_average} above.

\begin{proposition}[Theorem 1.4, \cite{LiuL2}]\label{prop_Liu_pinned_distance}
Suppose $E\subset \R^n$, $n\geq 2$. Then
\begin{align*}
    \dim_H(\{x\in \R^n: \mathcal{L}^1(\Delta_x(E)) =0\})\leq 
    \inf\{s: \dim_H(E)+\beta(s)>n\}.
\end{align*}
In particular, if $\dim_H(E)+\beta(\dim_H(E))>n$, there exists $x\in E$ such that $\Delta_x(E)$ has positive Lebesgue measure.
\end{proposition}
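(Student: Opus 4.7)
The plan is to deduce the dimension bound on $B:=\{x\in\mathbb{R}^n:\mathcal{L}^1(\Delta_x(E))=0\}$ directly; the ``in particular'' statement will then follow, since when $\dim_H(E)+\beta(\dim_H(E))>n$ the monotonicity of $\beta$ forces $s_0:=\inf\{s:\dim_H(E)+\beta(s)>n\}$ to be strictly smaller than $\dim_H(E)$, so $\dim_H(B)\le s_0<\dim_H(E)$ and $E\setminus B$ must contain a pin yielding a pinned distance set of positive Lebesgue measure.

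For the main bound I argue by contradiction. Suppose $\dim_H(B)>s_0$; choose $s$ with $s_0<s\le\dim_H(B)$ so that $\dim_H(E)+\beta(s)>n$, and invoke Frostman's lemma to produce a $t$-Frostman probability measure $\mu$ supported on $E$, with $t<\dim_H(E)$ and $t+\beta(s)>n$, together with an $s$-Frostman probability measure $\lambda$ supported on $B$. Set $\nu_x:=(d_x)_\sharp\mu$ where $d_x(y)=|x-y|$. My goal is
\[
I\;:=\;\int\|\nu_x\|_{L^2(\mathbb{R})}^2\,d\lambda(x)\;<\;\infty,
\]
because then $\|\nu_x\|_{L^2}<\infty$ for $\lambda$-a.e.\ $x$, making $\nu_x$ absolutely continuous with respect to Lebesgue measure, so $\Delta_x(E)=\mathrm{supp}\,\nu_x$ has positive Lebesgue measure for such $x$, contradicting $\mathrm{supp}\,\lambda\subset B$.

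The core step is to unfold $I$ via an $L^2$ Fourier identity of Liu type. By Plancherel in the scalar variable, $\|\nu_x\|_{L^2}^2=(2\pi)^{-1}\int|\hat\nu_x(\tau)|^2\,d\tau$ with $\hat\nu_x(\tau)=\int e^{-i\tau|x-y|}\,d\mu(y)$. The $n$-dimensional Fourier transform of the oscillatory kernel $e^{-i\tau|\cdot|}$ is a tempered distribution concentrated on the sphere $|\xi|=|\tau|$ with a Bessel-type singularity; expanding by stationary phase, I rewrite $\hat\nu_x(\tau)$ as a spherical integral of $e^{-i\tau x\cdot\omega}\hat\mu(\tau\omega)$ against a smooth amplitude, plus an acceptable remainder. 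Squaring, integrating in $x$ against $\lambda$, and decoupling via Cauchy--Schwarz after a dyadic Littlewood--Paley decomposition in $\tau$ will produce an estimate of the shape
\[
I\;\lesssim\;\int_{1}^{\infty}\tau^{\,n-1-t}\!\left(\int_{S^{n-1}}|\hat\lambda(\tau\omega)|^2\,d\sigma(\omega)\right)d\tau\;+\;\text{(bounded low-frequency term)},
\]
where the factor $\tau^{n-1-t}$ comes from pairing the spherical concentration of $\widehat{e^{-i\tau|\cdot|}}$ against the Frostman energy $\int|\hat\mu(\xi)|^2|\xi|^{t-n}\,d\xi<\infty$, while the spherical average of $\hat\lambda$ is the only remaining object to control.

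Applying the spherical averaging bound \eqref{eq_spherical_average} to $\lambda$ yields $\int_{S^{n-1}}|\hat\lambda(\tau\omega)|^2\,d\sigma(\omega)\lesssim \tau^{-\beta(s)}$ for $\tau>1$, so
\[
I\;\lesssim\;\int_{1}^{\infty}\tau^{\,n-1-t-\beta(s)}\,d\tau,
\]
which converges precisely because $t+\beta(s)>n$, completing the contradiction. The hard part will be the Fourier-analytic bookkeeping: isolating the principal spherical-surface contribution of $\widehat{e^{-i\tau|\cdot|}}$ from its Hankel remainder, and carrying out the Cauchy--Schwarz decoupling so as to cleanly separate the Frostman energy of $\mu$ from the spherical averages of $\lambda$, thereby producing the asymmetric combination $t+\beta(s)$ appearing in the hypothesis rather than a naively symmetric one.
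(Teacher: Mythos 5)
The paper does not prove Proposition \ref{prop_Liu_pinned_distance}; it is quoted verbatim as Theorem 1.4 of Liu \cite{LiuL2} and used as a black box, so there is no in-paper proof to compare against. Judged as a reconstruction of Liu's argument, your outline is on the right track in its reduction (showing $\int\|\nu_x\|_{L^2}^2\,d\lambda(x)<\infty$ for Frostman measures $\mu$ on $E$ and $\lambda$ on a compact subset of the would-be large exceptional set), in its Plancherel rewriting of $\widehat{\nu_x}$, and in the ``in particular'' deduction (though strict monotonicity alone does not force $s_0<\dim_H(E)$; you also need continuity of $\beta$, which does hold for the exponents listed).

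The genuine gap is the ``Cauchy--Schwarz decoupling'' step. After the $L^2$ identity you arrive at a bilinear expression of the shape
\begin{align*}
\int_0^\infty \iint_{S^{n-1}\times S^{n-1}}\widehat{\mu}(r\omega)\,\overline{\widehat{\mu}(r\theta)}\,\overline{\widehat{\lambda}\big(r(\omega-\theta)\big)}\,d\omega\,d\theta\;r^{n-1}\,dr.
\end{align*}
If you apply Cauchy--Schwarz in $(\omega,\theta)$ here, you are left controlling the $L^1$ spherical average $\int_{S^{n-1}}\big|\widehat{\lambda}(r(\omega-\theta))\big|\,d\theta$, and converting that to the $L^2$ average \eqref{eq_spherical_average} costs another Cauchy--Schwarz and yields only $r^{-\beta(s)/2}$; the resulting condition is $t+\beta(s)/2>n$, which is strictly weaker than claimed (for instance when $s\le\frac{n-1}{2}$, where $\beta(s)=s$). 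Liu's mechanism avoids this loss: one expands $\widehat{\lambda}(r(\omega-\theta))=\int e^{-ir(\omega-\theta)\cdot y}\,d\lambda(y)$ and pulls the $y$-integral outside, whereupon the double spherical integral factors as a perfect square,
\begin{align*}
\int_0^\infty\int_{\R^n}\bigg|\int_{S^{n-1}}\widehat{\mu}(r\omega)\,e^{-iry\cdot\omega}\,d\omega\bigg|^2 d\lambda(y)\;r^{n-1}\,dr,
\end{align*}
and this weighted $L^2$ extension quantity is then controlled by the operator-norm form of the spherical decay estimate,
\begin{align*}
\int_{\R^n}\bigg|\int_{S^{n-1}}f(\omega)\,e^{-iry\cdot\omega}\,d\omega\bigg|^2 d\lambda(y)\;\lesssim\; r^{-\beta(s)}\int_{S^{n-1}}|f(\omega)|^2\,d\omega,
\end{align*}
which is what Wolff, Erdo\u{g}an, and Du--Guth--Ou--Wang--Wilson--Zhang actually prove; \eqref{eq_spherical_average} is the special case $f\equiv1$ by duality. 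Feeding this back in produces $\int|\widehat{\mu}(\xi)|^2|\xi|^{-\beta(s)}\,d\xi$, finite precisely when $t+\beta(s)>n$. Without this factorization --- with only a generic Cauchy--Schwarz/Littlewood--Paley decoupling --- you will not recover the full exponent $\beta(s)$, only $\beta(s)/2$, and the asserted intermediate bound $I\lesssim\int_1^\infty\tau^{\,n-1-t}\big(\int_{S^{n-1}}|\widehat{\lambda}(\tau\omega)|^2d\omega\big)\,d\tau$ is not reached.
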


In two dimensions, we recall the following result due to Guth, Iosevich, Ou and Wang \cite{GIOW}, which is the current best known dimensional threshold of the Falconer problem. The following variant is taken from \cite{OuTaylor}.

\begin{proposition}[\cite{GIOW}]\label{prop_outaylor}
Let $E,F\subset \R^2$ be a pair of compact sets with positive $s$-dimensional Hausdorff measure for some $s>\frac{5}{4}$. Further, suppose that there exist Frostman probability measures $\mu_E$ and $\mu_F$ on $E$ and $F$ respectively, with exponent $s$. Then 
\begin{align*}
    \mu_{F}(y\in F: \mathcal{L}^1(\Delta_y(E))>0)>0.
\end{align*}
\end{proposition}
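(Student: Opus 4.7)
The plan is to deploy Liu's $L^2$-framework (the one behind Proposition~\ref{prop_Liu_pinned_distance}), polarized to accommodate two distinct Frostman measures, and then invoke the Guth--Iosevich--Ou--Wang bilinear spherical averaging estimate from \cite{GIOW} as the essential Fourier-analytic input. For each pin $y \in F$, define the pinned distance measure $\nu_y$ on $\Delta_y(E)$ as the pushforward of $\mu_E$ under the map $x \mapsto |x-y|$. Since $\nu_y \in L^2(\R)$ forces $\mathcal{L}^1(\Delta_y(E)) > 0$, it suffices to establish
\[
\int_F \|\nu_y\|_{L^2(\R)}^2 \, d\mu_F(y) < \infty,
\]
because Fubini then yields a positive $\mu_F$-measure set of pins $y$ with $\nu_y \in L^2$.

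Next I would compute $\|\nu_y\|_{L^2}^2$ by Plancherel and a polar-coordinate change, exactly mirroring Liu's identity: this expresses the squared norm as a spherical-average integral of $|\widehat{\mu_E}(r\omega)|^2$ multiplied by a $y$-dependent oscillatory weight coming from the Fourier transform of the circle's arclength measure. Integrating in $y$ against $\mu_F$ then collapses that weight into $|\widehat{\mu_F}(r\omega)|^2$, and after absorbing the low-frequency contribution (trivially controlled by the $s$-Frostman hypothesis on $\mu_E$ and $\mu_F$), the problem reduces to the bilinear bound
\[
\int_1^\infty r \int_{S^1} |\widehat{\mu_E}(r\omega)|^2 \, |\widehat{\mu_F}(r\omega)|^2 \, d\omega \, dr < \infty
\]
for a pair of $s$-Frostman measures on $\R^2$ with $s > 5/4$.

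This last inequality is precisely the two-measure (polarized) form of the GIOW spherical averaging estimate; in the symmetric case $\mu_E = \mu_F$ it is the ingredient that breaks the $5/4$-barrier for the planar Falconer problem, and its extension to two distinct measures is carried out in \cite{OuTaylor}. The main and essentially only real obstacle is this bilinear decay estimate itself, whose proof rests on refined $\ell^2$-decoupling, a broad/narrow decomposition, and an Orponen-style radial projection argument; I would cite it as a black box. The remaining manipulations --- Plancherel, the polar change of variables, the low-frequency cutoff via the Frostman bound, and the final Fubini --- are routine and present no real difficulty.
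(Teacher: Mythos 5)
This proposition is not proved in the paper at all; it is cited directly from \cite{GIOW}, with the two-measure variant recorded in \cite{OuTaylor}, and used as a black box. So the only thing to assess is whether your sketch correctly reproduces the actual argument in those references, and it does not: there is a genuine gap, in fact two.

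The reduction you propose is mechanically wrong. In $\R^2$, Liu's $L^2$-identity expresses $\Vert\nu_y\Vert_{L^2}^2$, up to constants and a harmless low-frequency piece, as $\int_0^\infty r\,\big\vert\widehat{\mu_E\sigma_r}(y)\big\vert^2\,dr$, where $\widehat{\mu_E\sigma_r}(y)=\int_{S^1}\widehat{\mu_E}(r\omega)e^{2\pi i r y\cdot\omega}\,d\omega$. It is not "$\vert\widehat{\mu_E}(r\omega)\vert^2$ times a $y$-dependent weight''; it is the square modulus of an oscillatory integral. Expanding that square and integrating in $y$ against $\mu_F$ produces
\begin{align*}
\int_0^\infty r\int_{S^1}\int_{S^1}\widehat{\mu_E}(r\omega)\,\overline{\widehat{\mu_E}(r\theta)}\,\widehat{\mu_F}\big(r(\theta-\omega)\big)\,d\omega\,d\theta\,dr,
\end{align*}
in which $\widehat{\mu_F}$ is sampled on the chords $r(\theta-\omega)$, not on the circle. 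This off-diagonal trilinear form does not collapse to the diagonal quantity $\int_1^\infty r\int_{S^1}\vert\widehat{\mu_E}(r\omega)\vert^2\vert\widehat{\mu_F}(r\omega)\vert^2\,d\omega\,dr$ that you declare to be the crux, and the latter is not an estimate that appears in \cite{GIOW}.

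More fundamentally, the overall strategy of proving $\int_F\Vert\nu_y\Vert_{L^2}^2\,d\mu_F(y)<\infty$ for the original $\mu_E$ cannot reach $5/4$. Feeding the Wolff--Erdo\u{g}an spherical decay $\beta(s)=s/2$ into Liu's identity gives the threshold $s+\beta(s)>2$, i.e.\ $s>4/3$, in the plane, and Wolff's decay exponent is known to be sharp; so for $5/4<s<4/3$ this integral can genuinely be infinite. The entire point of \cite{GIOW} is the good/bad decomposition $\mu_E=\mu_E^{good}+\mu_E^{bad}$: refined decoupling and the broad/narrow argument are used to prove $L^2$-finiteness for $\nu_y^{good}$ only, while Orponen's radial projection theorem is used separately to show that for $\mu_F$-a.e.\ $y$ the bad part contributes total mass strictly less than $1$ to $\nu_y$, so the good part retains positive mass and its $L^2$ density forces $\mathcal{L}^1(\Delta_y(E))>0$. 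You do name decoupling, broad/narrow, and radial projections, but you place them inside a nonexistent bilinear spherical-average estimate; they actually live inside the decomposition, and without the decomposition your direct $L^2$ argument cannot close below $4/3$.
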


We also recall the following result, which is proved in \cite{IosevichLiu19} by Iosevich and Liu.
\begin{proposition}[Theorem 1.3, \cite{IosevichLiu19}]\label{prop_IosevichLiu19}
Let $E, F\subset \R^n$ be compact sets. Then there exists a probability measure $\mu_F$ on $F$ such that  for $\mu_F-$a.e. $x\in F$,
\begin{itemize}
    \item[(i)] $\dim_H(\Delta_x(E))\geq \beta$ if $\dim_H(E)+\frac{n-1}{n+1}\dim_H(F)>n-1+\beta,$
    \item[(ii)] $\mathcal{L}^1(\Delta_x(E))>0$ if $\dim_H(E)+\frac{n-1}{n+1}\dim_H(F)>n$,
    \item[(iii)] $\Int(\Delta_x(E))\neq \emptyset$ if $\dim_H(E)+\frac{n-1}{n+1}\dim_H(F)>n+1$.
\end{itemize}
\end{proposition}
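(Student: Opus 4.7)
The plan is to follow the Iosevich--Liu pinned-distance scheme. Fix exponents $s_E < \dim_H E$ and $s_F < \dim_H F$, and take Frostman probability measures $\mu_E$ on $E$ and $\mu_F$ on $F$ of those exponents. For each $x$, set $\nu_x := (d_x)_\sharp \mu_E$ with $d_x(y) = |x-y|$; this measure is supported on $\Delta_x(E)$. The three conclusions correspond to three regularity statements about $\nu_x$ holding for $\mu_F$-a.e.\ $x$: (i) finite $\beta$-energy $I_\beta(\nu_x)$ forces $\dim_H\Delta_x(E)\geq\beta$ by standard potential theory, (ii) an $L^2$ density forces $\mathcal{L}^1(\Delta_x(E))>0$, and (iii) $\widehat{\nu_x}\in L^1$ yields a continuous density, hence $\Int(\Delta_x(E))\neq\emptyset$. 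All three reduce to estimating a single $\mu_F$-averaged integral.

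Smoothing $\mu_E$ and using the coarea formula gives $\nu_x(t)\approx c_n t^{n-1}(\sigma_t * \mu_E)(x)$, where $\sigma_t$ is the surface measure on the sphere of radius $t$. Applying Plancherel in $t$ and integrating against $\mu_F$ in $x$ reduces the problem to controlling
\[\int_F\|\nu_x\|_{L^2(\R)}^2\,d\mu_F(x) \;\approx\; \int_1^\infty r^{n-1}\,T_r(\mu_E,\mu_F)\,dr,\]
where $T_r(\mu_E,\mu_F)$ couples $\widehat{\mu_E}$ and $\widehat{\mu_F}$ through an angular kernel on the sphere of radius $r$. The decisive analytic input is a restriction-type bound
\[T_r(\mu_E,\mu_F) \;\lesssim\; r^{-s_E}\cdot r^{-\frac{n-1}{n+1}s_F},\]
whose exponent $(n-1)/(n+1)$ is the sharp Stein--Tomas weight arising when $\mu_F$ is $s_F$-Frostman. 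Plugging back, the right-hand side converges at infinity precisely when $s_E+\frac{n-1}{n+1}s_F>n$; letting $s_E\uparrow\dim_H E$ and $s_F\uparrow\dim_H F$ proves (ii). For (i), replace $\|\nu_x\|_{L^2}^2$ by $I_\beta(\nu_x)\approx\int|\widehat{\nu_x}(\tau)|^2|\tau|^{\beta-1}\,d\tau$; the extra $|\tau|^{\beta-1}$ factor shifts the threshold to $s_E+\frac{n-1}{n+1}s_F>n-1+\beta$, and potential theory yields $\dim_H\Delta_x(E)\geq\beta$. For (iii), apply Cauchy--Schwarz to $\|\widehat{\nu_x}\|_{L^1}$ against the weight $|\tau|^{1+\epsilon}$; since in one variable $\int_{|\tau|\geq 1}|\tau|^{-1-\epsilon}d\tau<\infty$, this reduces to controlling $\int|\widehat{\nu_x}(\tau)|^2|\tau|^{1+\epsilon}d\tau$, which costs one extra power of $r$ relative to (ii) and produces (after $\epsilon\to 0$) the threshold $s_E+\frac{n-1}{n+1}s_F>n+1$; continuity of the density of $\nu_x$ then forces $\Delta_x(E)$ to contain an open interval.

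The main difficulty is establishing the restriction bound on $T_r(\mu_E,\mu_F)$ with the sharp weight $(n-1)/(n+1)$ on $s_F$. A naive stationary-phase computation using the decay $|\widehat{\sigma_t}(\xi)|\lesssim|\xi|^{-(n-1)/2}$ together with only the Frostman decay of $\mu_F$ yields merely Mattila's classical threshold $s_E+s_F>n-\tfrac{1}{2}$, strictly weaker whenever $\dim_H F$ is large. Sharpening the exponent on $s_F$ to $(n-1)/(n+1)$ requires replacing the trivial decay of $\widehat{\sigma_t}$ by a Stein--Tomas restriction estimate that exploits the fractal concentration of $\mu_F$, and balancing the oscillatory and fractal contributions. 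In the original Iosevich--Liu work this is carried out via the Mockenhaupt--Seeger--Sogge local smoothing estimate for the half-wave propagator $e^{it\sqrt{-\Delta}}$, applied with $\mu_E$ as input and $\mu_F$ as the external averaging measure; extracting the correct exponent from that estimate and threading it through the $\mu_F$-averaged integral is the technical heart of the proof.
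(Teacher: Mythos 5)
The paper itself does not prove this proposition---it is cited verbatim as Theorem 1.3 of Iosevich--Liu \cite{IosevichLiu19} and used as a black box---so there is no in-paper argument to compare against; your proposal has to be judged against the cited source. Measured against that, your outline correctly reproduces the Iosevich--Liu architecture: Frostman measures $\mu_E$, $\mu_F$; push-forward $\nu_x = (d_x)_\sharp\mu_E$ supported on $\Delta_x(E)$ with $\nu_x(t)\approx t^{n-1}(\sigma_t*\mu_E)(x)$; reduction of (i)/(ii)/(iii) to the $\mu_F$-average of $I_\beta(\nu_x)$, of $\|\nu_x\|^2_{L^2}$, and of $\|\widehat{\nu_x}\|_{L^1}$, respectively; and the endpoint bookkeeping (the extra factor $|\tau|^{\beta-1}$ shifting the threshold from $n$ to $n-1+\beta$, and Cauchy--Schwarz against $|\tau|^{\pm(1+\epsilon)}$ shifting it to $n+1$) is correct once the key estimate for (ii) is granted.

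The gap is exactly where you acknowledge it: the central estimate $T_r(\mu_E,\mu_F)\lesssim r^{-s_E-\frac{n-1}{n+1}s_F}$ is asserted, not established, with its proof deferred to ``the Mockenhaupt--Seeger--Sogge local smoothing estimate\dots the technical heart of the proof.'' That deferral is the entire analytic content of the theorem, not an appendix. Two points must be flagged if you intend to fill it in. First, the pointwise-in-$r$ decay rate $r^{-s_E}$ is not something an $s_E$-Frostman measure supplies---Fourier transforms of Frostman measures need not decay pointwise at all---so the scale-by-scale bound as written cannot be correct; the $\mu_E$-side is actually controlled globally in $r$ through finiteness of the energy $I_{s_E}(\mu_E)=c\int|\widehat{\mu_E}(\xi)|^2|\xi|^{s_E-n}\,d\xi$, and your reduction needs to be rephrased accordingly. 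Second, extracting the weight $\frac{n-1}{n+1}$ on $s_F$ is not a routine Stein--Tomas corollary: one writes $\sigma_t*\mu_E$ via the half-wave propagator, decomposes dyadically in frequency, applies the $L^2$-based local smoothing estimate to each piece, and only then pairs with $\mu_F$ using its Frostman decay. None of that machinery appears in your write-up. As submitted, you have a correct roadmap and correct endpoint arithmetic, but not a proof.
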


\section{Connection between product of sets on paraboloids and simplex problem}\label{sec_connection}

The main goal of this section is to set up the problem and outline a connection between the pinned dot-product and pinned simplex problems.

Assume that $E$ and $F$ are compact sets on $P_n$, with $n\geq 3$. By pigeon-holing, we may assume that
\begin{align*}
   \vert x\vert,\,\vert y\vert \geq c>0, \quad \forall\, x\in E, y\in F,
\end{align*}
for some constant $c>0$. Denote $P_n^{\ast}=P_n\setminus \{0\}$. We consider the following maps
\begin{align*}
    H_1: P_n^\ast &\to (\R^\ast)^{n-1},\quad  x\mapsto H_1(x)=\overline{x}, \\
    H_2: P_n^\ast &\to (\R^\ast)^{n-1}, \quad y\mapsto H_2(y)=\frac{-\overline{y}}{2\vert \overline{y}\vert^2},
\end{align*}
where for each point $x=(x_1, \dots, x_{n-1},x_n)\in \R^n$, we denote $\overline{x}=(x_1, \dots,x_{n-1})\in \R^{n-1}$. The images of $E$ and $F$ via $H_1$ and $H_2$, will be denoted by $E'$ and $F'$, respectively, i.e.
\begin{align}\label{eq_defE'F'}
    E':=H_1(E) \quad\text{ and }\quad
    F':=H_2(F).
\end{align}
Let's fix some point $y\in F$ and put $y'=H_2(y)$. Then using a similar argument as in the introduction, one can check that for all $x\in E$,
\begin{align}\label{eq_xyt}
    x\cdot y=t \quad \Longleftrightarrow\quad \vert y'-\overline{x}\vert=\vert y'\vert\sqrt{1+4t},
\end{align}
where $\overline{x}=H_1(x)$. Geometrically, the projection of all points $x$ satisfying the left hand side to one lower dimensional space is on a sphere centered at $y'$ of radius $\vert y'\vert\sqrt{1+4t}$. As we mentioned in the introduction, this geometric observation plays a vital role in our work.

Motivated by this, let us consider more general case. Assume that $k$ is an integer such that $1\leq k\leq n$. Given any $k-$tuple $y=(y^1,\dots, y^k)\in F^k$, we define 
\begin{align*}
    \Pi_y^k(E):=\big\{ \,(x\cdot y^1,\dots, x\cdot y^k)\,\colon\, x\in E\,\big\}.
\end{align*}
Put $y'=(y^{1'},\ldots, y^{k'})\in (F')^k$, where $y^{j'}=H_2(y^j)$ for each $1\leq j\leq n$, and define the set of $k-$stars determined by $E'$ pinned at $y'$ by
\begin{align*}
    \Delta_{y'}^k(E'):=\big\{\, (\,\vert y^{1'}-\overline{x}\vert\,,\ldots, \vert y^{k'}-\overline{x}\vert\,)\,\colon \,\overline{x}\in E'\,\big\}.
\end{align*}
For each tuple $y\in F^k$, we define the corresponding map $\Gamma_y$ by
\begin{align*}
    \Gamma_y: \big[-1/4,\infty\big)^k &\to \big[0,\,\infty\,\big)^k\\
    (t_1,\dots,t_k)&\mapsto \big(\,\vert y^{1'}\vert\sqrt{1+4t_1}, \ldots,\vert y^{k'}\vert\sqrt{1+4t_k}\,\big).
\end{align*}
Then it is straightforward to see that the set $\Delta_{y'}^k(E)$ is the image of $\Pi_y^k(E)$ via $\Gamma_y$, and vice versa, i.e.
\begin{align*}
    \Delta_{y'}^k(E')=\Gamma_y\big(\Pi_y^k(E)\big)\quad \text{ and }\quad \Pi_y^k(E)=\Gamma_y^{-1}\big(\Delta_{y'}^k(E')\big).
\end{align*}
Moreover, one can see that $\Gamma_y$ is a homeomorphism, and hence bi-Lipschitz over compact sets. This implies that
\begin{align}\label{eq_equal_dim}
    \dim_H(\Delta_{y'}^k(E'))=\dim_H(\Pi_y^k(E)).
\end{align}
In terms of Lebesgue measures, one also has
\begin{align}\label{eq_equivalent_positive_L_measures}
    \mathcal{L}^k\big(\Delta_{y'}^k(E')\big)>0 \quad \Longleftrightarrow\quad  \mathcal{L}^k\big(\Pi_y^k(E)\big)>0.
\end{align}
Furthermore, Lemma \ref{lem_equivalent_finitness_L2norm} below provides us stronger information: an equivalent between $L^2$ norm of natural measures supported on $\Pi_y^k(E)$ and $\Delta_{y'}^k(E')$. 

Assume that $\mu_E$ and $\mu_F$ are corresponding Frostman probability measures supported on $E$ and $F$, with exponents $s_E<\dim_H(E)$ and $s_F<\dim_H(F)$, respectively. Then we may define measures which supported on $E'$ and $F'$, by
\begin{align}\label{eq_pushforward_measures}
    \mu_{E'}:=H_{1\sharp}\mu_E\quad,\quad \mu_{F'}:=H_{2\sharp}\mu_F.
\end{align}
The bi-Lipschitz property of $H_1$ and $H_2$ ensures that $\mu_{E'}$ and $\mu_{F'}$ are Frostman probability measures with exponents $s_E$ and $s_F$, respectively.

With these notations in hand, let us define a natural measure $\eta_{y'}$, supported on $\Delta_{y'}^k(E')$, by the following relation
\begin{align}\label{eq_definition_eta}
    \int_{\R^k} f(t)d\eta_{y'}(t)=\int_{E'}f(\,\vert y^{1'}-\overline{x}\vert,\ldots, \vert y^{k'}-\overline{x}\vert\,) d\mu_{E'}(\overline{x})\,,\quad \forall\, f\in C_0(\R^k).
\end{align}
Then we may define measure $\nu_y$, supported on $\Pi_y^k(E)$, by relation
\begin{align}\label{eq_definition_nu}
    \int_{\R^k}g(u)d\nu_y(u) = \int_{\R^k}g(\,\Gamma_y^{-1}(t)\,)d\eta_{y'}(t), \quad \forall\, g\in C_0(\R^k).
\end{align}

\begin{lemma}\label{lem_equivalent_finitness_L2norm}
With the notations above, for each $y=(y^1,\dots, y^k)\in F^k$, we have
\begin{align}
    \Vert \nu_y\Vert_{L^2}\approx \Vert \eta_{y'}\Vert_{L^2}.
\end{align}
\end{lemma}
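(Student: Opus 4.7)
The plan is to treat the lemma as a bi-Lipschitz change-of-variables statement. Directly from \eqref{eq_definition_eta} and \eqref{eq_definition_nu}, the measure $\nu_y$ is the pushforward $(\Gamma_y^{-1})_\sharp \eta_{y'}$, so the task reduces to controlling the Jacobian of $\Gamma_y$ and applying the usual change-of-variables formula.

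The Jacobian is easy to compute: since $\Gamma_y$ acts coordinate-wise by $t_j \mapsto |y^{j'}|\sqrt{1+4t_j}$, we have
\[
|\det D\Gamma_y(t)| \;=\; \prod_{j=1}^{k}\frac{2|y^{j'}|}{\sqrt{1+4t_j}},
\]
which, after the substitution $r_j = |y^{j'}|\sqrt{1+4t_j}$, equals $\prod_{j=1}^{k}\frac{2|y^{j'}|^2}{r_j}$. I would then verify that this expression is bounded above and below by positive constants on the support of $\eta_{y'}$. The pigeon-holing $|y^j|\ge c$ together with $y^j \in P_n$ forces $|\overline{y^j}|$ into a compact subinterval of $(0,1]$, so each factor $|y^{j'}| = 1/(2|\overline{y^j}|)$ is comparable to a constant. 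It therefore suffices to control $r_j = |y^{j'}-\overline{x}|$ from above and below for $\overline{x} \in E'$. The upper bound is immediate from compactness of $E'$ and $F'$. For the lower bound, I would pigeon-hole further to remove a small neighborhood of each singular point $\overline{x}=y^{j'}$; since each such point is a single point in $\R^{n-1}$ and the $s_E$-Frostman measure $\mu_{E'}$ gives its small neighborhoods arbitrarily small mass, this removes only a negligible subset of $E$, after which $|\det D\Gamma_y| \approx 1$ uniformly.

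Granted this, the standard change of variables finishes the job. If $\eta_{y'}$ admits an $L^2$ density $\rho$, then $\nu_y$ has density $t \mapsto \rho(\Gamma_y(t))|\det D\Gamma_y(t)|$, and
\begin{align*}
\Vert\nu_y\Vert_{L^2}^2 \;=\; \int |\rho(\Gamma_y(t))|^2\, |\det D\Gamma_y(t)|^2 \,dt \;=\; \int |\rho(r)|^2\, |\det D\Gamma_y(\Gamma_y^{-1}(r))|\, dr \;\approx\; \Vert\eta_{y'}\Vert_{L^2}^2,
\end{align*}
the middle equality coming from the substitution $r=\Gamma_y(t)$. The reverse direction is symmetric, and if neither density is $L^2$ then both sides are $+\infty$ and the conclusion is vacuous. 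The main obstacle is the singularity of $\Gamma_y$ at $r_j=0$ (equivalently $t_j=-1/4$), where the Jacobian $\prod_j 2|y^{j'}|^2/r_j$ blows up; resolving this requires the additional pigeon-hole described above but no harmonic analysis. The lemma is a purely geometric transfer, and the Fourier-analytic estimates elsewhere in the paper enter only when this lemma is combined with the spherical-average bounds recalled in Section~\ref{sec_preliminaries}.
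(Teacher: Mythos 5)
Your argument and the paper's are essentially the same: both reduce the lemma to the statement that the Jacobian of $\Gamma_y$ is bounded above and below on the region that matters. The paper packages this via duality and Cauchy--Schwarz (writing $\langle g,\nu_y\rangle=\int g\circ\Gamma_y^{-1}\,d\eta_{y'}$ and invoking "bounded Jacobian" to get $\|g\circ\Gamma_y^{-1}\|_{L^2}\lesssim\|g\|_{L^2}$), whereas you do the pushforward/change-of-variables computation explicitly; these are the same estimate dressed differently, and your computation $\|\nu_y\|_{L^2}^2=\int|\rho(r)|^2\prod_j\frac{2|y^{j'}|^2}{r_j}\,dr$ is correct.

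Where you add something is in noticing that the Jacobian $\prod_j 2|y^{j'}|^2/r_j$ is \emph{not} automatically bounded above: it blows up as $r_j\to 0$, i.e.\ when $\overline{x}$ approaches $y^{j'}$, and the paper's proof simply asserts boundedness without comment. The paper does handle this, but implicitly and in the applications rather than in the lemma: in Lemma \ref{lem_kpinned_distances} the sets $E'$ and $F'$ are pigeonholed into disjoint balls of positive distance before the lemma is invoked, which gives a uniform lower bound $r_j\geq\operatorname{dist}(E',F')>0$ independent of $y$. Your proposed fix --- deleting a small $\mu_{E'}$-neighbourhood of each $y^{j'}$ for each fixed $y$ --- is in the right spirit but less clean: it modifies $E'$ (hence $\eta_{y'}$ itself) in a $y$-dependent way, so as written it proves an inequality between the $L^2$ norms of \emph{modified} measures rather than the stated ones, and the implicit constants would depend on the excised mass. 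The $y$-independent separation pigeonhole (done once, upstream of the lemma, as the paper does) is the cleaner way to legitimize the uniform bound $J(r)\approx 1$ on $\operatorname{supp}\eta_{y'}$. Otherwise your Jacobian computation, the identification $\nu_y=(\Gamma_y^{-1})_\sharp\eta_{y'}$, and the observation that $|y^{j'}|=1/(2|\overline{y^j}|)$ is pinned to a compact subinterval are all correct.
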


\begin{proof}
We will show that
$
    \Vert \nu_y\Vert_{L^2}\lesssim \Vert \eta_{y'}\Vert_{L^2}.
$
The opposite inequality can be obtained by the same argument.

By duality, it enough to show that
\begin{align*}
    \langle g, \nu_y\rangle \lesssim \Vert g\Vert_{L^2}\cdot \Vert \eta_{y'}\Vert_{L^2}.
\end{align*}
From the definition of $\nu_y$, the left-hand side equals
\begin{align*}
    \int g(u)d\nu_y(u) 
    =\int g(\Gamma^{-1}(t))d\eta_{y'}(t).
\end{align*}
Apply the Cauchy-Schwarz inequality, and using the fact that $\Gamma_y^{-1}$ has a bounded Jacobian, one gets
\begin{align*}
    \langle g, \nu_y\rangle 
    &\leq \bigg(\int \vert (g\circ \Gamma^{-1})(t)\vert^2dt\bigg)^{1/2}\bigg(\int \vert \eta_{y'}(t)\vert^2dt\bigg)^{1/2}
    \lesssim \Vert g\circ \Gamma^{-1}\Vert_{L^2}\cdot \Vert \eta_{y'}\Vert_{L^2}
    \lesssim \Vert g\Vert_{L^2}\cdot \Vert \eta_{y'}\Vert_{L^2}.
\end{align*}
The proof of lemma is completed.
\end{proof}

\section{Proofs of Theorems \ref{thm1.1}, \ref{thm1.2}, and \ref{thm1.3}}\label{sec_proof123}

In this section, we will apply the connection which is established in the previous section to give the proofs of Theorems \ref{thm1.1}, \ref{thm1.2}, and \ref{thm1.3}. 
\subsection{Proof of Theorem \ref{thm1.1}}
Theorem \ref{thm1.1} is an immediate consequence of the following.
\begin{theorem}\label{thm_1a}
Let $E,F$ be a pair of compact sets on $P_d^\ast$, $d\geq 3$. Assume that $\mu_E$ and $\mu_F$ are Frostman probability measures supported on $E$ and $F$ respectively, with corresponding exponents $s_{\mu_E}<\dim_H(E)$, $s_{\mu_F}<\dim_H(F)$. Assume that
\begin{align}\label{eq_condition_measures}
   s_{\mu_E}+\beta(s_{\mu_F})>d-1.
\end{align}
Then for $\mu_F$ a.e. $y\in F$,
\begin{align}\label{eq_finiteness_L2norm_nu}
    \int\vert \nu_y(t)\vert^2dt<\infty.
\end{align}
In particular, for $\mu_F$ a.e. $y\in F$, $\mathcal{L}^1(\Pi_y^1(E))>0$. Here $\nu_y$ is the natural measure supported on $\Pi_y^1(E)$, defined in \eqref{eq_definition_nu}.
\end{theorem}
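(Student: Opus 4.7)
The plan is to transport the problem, via the reduction of Section \ref{sec_connection}, to an $L^2$ version of the pinned distance problem in $\R^{d-1}$ and then invoke the machinery that underlies Liu's Proposition \ref{prop_Liu_pinned_distance}, but applied in ambient dimension $d-1$ instead of $d$. This drop in dimension is exactly what delivers the gain in the dimensional threshold.

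First, by Lemma \ref{lem_equivalent_finitness_L2norm} the bound \eqref{eq_finiteness_L2norm_nu} is equivalent to $\|\eta_{y'}\|_{L^2}<\infty$, and since $\mu_{F'}=H_{2\sharp}\mu_F$ with $H_2$ bi-Lipschitz on $P_d^\ast$, the statement ``for $\mu_F$-a.e.\ $y\in F$'' translates to ``for $\mu_{F'}$-a.e.\ $y'\in F'$.'' By construction \eqref{eq_definition_eta}, $\eta_{y'}$ is nothing other than the pushforward of $\mu_{E'}$ under $\overline{x}\mapsto |y'-\overline{x}|$; i.e., the natural measure on the pinned distance set $\Delta_{y'}(E')$ of $E'\subset\R^{d-1}$ at $y'$. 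Because $H_1$ and $H_2$ are bi-Lipschitz on $P_d^\ast$, the pushforwards $\mu_{E'}$ and $\mu_{F'}$ are Frostman with the same exponents $s_{\mu_E}$ and $s_{\mu_F}$ as the original measures. The task thus reduces to the pinned distance set problem in $\R^{d-1}$ for the pair $(E',F')$.

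At this point we run the $L^2$ argument behind the proof of Proposition \ref{prop_Liu_pinned_distance}. Expanding
$$\int_{F'}\|\eta_{y'}\|_{L^2}^2\, d\mu_{F'}(y')$$
via Plancherel in the distance variable and switching to polar coordinates in $\R^{d-1}$, one bounds this averaged quantity by an integral essentially of the form $\int_1^\infty r^{d-2}\bigl(\int_{S^{d-2}}|\widehat{\mu_{E'}}(r\omega)|^2\, d\omega\bigr)\cdot r^{-\beta(s_{\mu_F})}\, dr$, plus lower-order terms, where the factor $r^{-\beta(s_{\mu_F})}$ comes from applying the spherical average estimate \eqref{eq_spherical_average} to the Frostman measure $\mu_{F'}$ (exponent $s_{\mu_F}$). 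A standard energy computation using the Frostman exponent $s_{\mu_E}$ of $\mu_{E'}$ then shows the full integral converges exactly when $s_{\mu_E}+\beta(s_{\mu_F})>d-1$, which is hypothesis \eqref{eq_condition_measures}.

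Finiteness of the averaged quantity forces $\|\eta_{y'}\|_{L^2}<\infty$ for $\mu_{F'}$-a.e.\ $y'\in F'$, hence $\|\nu_y\|_{L^2}<\infty$ for $\mu_F$-a.e.\ $y\in F$ by Lemma \ref{lem_equivalent_finitness_L2norm}. Since $\nu_y$ is a compactly supported $L^2$ density on $\R$, its support $\Pi_y^1(E)$ automatically has positive Lebesgue measure. The main technical step is the averaged $L^2$ estimate in the third paragraph, but no new ideas beyond Liu's framework are required; the entire improvement over applying Proposition \ref{prop_Liu_pinned_distance} directly in $\R^d$ comes from the dimension drop supplied by the paraboloid parametrization through $H_1$ and $H_2$.
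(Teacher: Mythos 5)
Your proposal is correct and takes essentially the same route as the paper: reduce to pinned distances in $\R^{d-1}$ via $H_1, H_2$ and the bi-Lipschitz transfer of Frostman exponents, apply Liu's averaged $L^2$ argument to get $\iint |\eta_{y'}(t)|^2\,dt\,d\mu_{F'}(y') < \infty$ under hypothesis \eqref{eq_condition_measures}, then pull back to $\nu_y$ via Lemma \ref{lem_equivalent_finitness_L2norm}. The only cosmetic difference is that the paper simply cites Proposition \ref{prop_Liu_pinned_distance} and notes that its proof yields the $L^2$ bound, whereas you unpack the underlying Plancherel-plus-spherical-average-plus-energy computation explicitly.
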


\begin{proof}[Proof of Theorem \ref{thm1.1}]
Apply Theorem \ref{thm_1a} with $E=F$, and $s_{\mu_E}=s_{\mu_F}$. Solving \eqref{eq_condition_measures} will give us $s_{\mu_E}>\frac{d-1}{2}+\frac{1}{4}+\frac{1}{8d-12}$ as required. The existence of $y\in E$ such that $\Vert \nu_y\Vert_{L^2}<\infty$ also follows.
\end{proof}

\begin{proof}[Proof of Theorem \ref{thm_1a}]
To prove this theorem, it suffices to show that \eqref{eq_finiteness_L2norm_nu} holds for $\mu_F$ a.e. $y\in F$. Then in fact $d\nu_y$ is absolutely continuous with respect to Lebesgue measure, with density in $L^2$, also denoted by $\nu_y(t)$. Apply the Cauchy-Schwarz inequality, one has
\begin{align*}
    1=\bigg(\int d\nu_y(t)\bigg)^2 \leq \bigg(\int_{\Pi_y^1(E)}dt\bigg)\cdot \bigg(\int\vert \nu_y(t)\vert^2dt\bigg)
\end{align*}
which yields that $\mathcal{L}^1(\Pi_y^1(E))>0$.

In order to prove \eqref{eq_finiteness_L2norm_nu}, we make use of the notations from Section \ref{sec_connection} with $n=d$ and $k=1$. In this context, the pinned distance set determined by $E'$ at $y'\in F'$ is defined by
\begin{align*}
    \Delta_{y'}(E'):=\{\, \vert y'-\overline{x}\vert : \overline{x}\in E'\,\}.
\end{align*}
Denote $\eta_{y'}$ as the natural measure on $\Delta_{y'}(E')$, which is defined in \eqref{eq_definition_eta}. Then, apply Proposition \ref{prop_Liu_pinned_distance} with compact sets $E'$ and $F'$, one can verify that
\begin{align*}
    \iint\vert \eta_{y'}(t)\vert^2dtd\mu_{F'}(y')<\infty.
\end{align*}
Due to Lemma \ref{lem_equivalent_finitness_L2norm}, one has for $\mu_F$ a.e. $y\in F$,
\begin{align}\label{eq_estimate_L2_pushforward}
    \int \vert \nu_{y}(u)\vert^2du\,\lesssim \int\vert \eta_{y'}(t)\vert^2dt\,<\,\infty.
\end{align}
Thus \eqref{eq_finiteness_L2norm_nu} holds and we complete the proof of Theorem \ref{thm_1a}.
\end{proof}

\subsection{Proof of Theorem \ref{thm1.2}}
Theorem \ref{thm1.2} follows by the following.
\begin{theorem}\label{thm_EF_P3}
Let $E, F\subset P_3^{\ast}$ be a pair of compact sets with $\dim_H(E)$, $\dim_H(F)>\frac{5}{4}$.
Suppose that $\mu_E$ and $\mu_F$ are $s$-Frostman probability measures on $E$ and $F$, respectively, with $\dim_H(E),\dim_H(F)>s>\frac{5}{4}$. Then 
\begin{align*}
    \mu_F\big(\{ y\in F: \mathcal{L}^1(\Pi_y^1(E))>0\}\big)>0.
\end{align*}
\end{theorem}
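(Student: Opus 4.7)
The plan is to reduce Theorem \ref{thm_EF_P3} to the planar pinned distance result of Guth--Iosevich--Ou--Wang (Proposition \ref{prop_outaylor}) via the connection developed in Section \ref{sec_connection}. First, by pigeonholing I would assume that $|\overline{x}|, |\overline{y}| \geq c > 0$ on $E$ and $F$, which ensures that the maps $H_1$ and $H_2$ are bi-Lipschitz onto their images $E', F' \subset \mathbb{R}^2$. Consequently $\dim_H(E') = \dim_H(E) > 5/4$ and $\dim_H(F') = \dim_H(F) > 5/4$, and the pushforwards $\mu_{E'} = H_{1\sharp}\mu_E$ and $\mu_{F'} = H_{2\sharp}\mu_F$ defined in \eqref{eq_pushforward_measures} are $s$-Frostman probability measures on $E'$ and $F'$ for the same $s > 5/4$.

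Next I would apply Proposition \ref{prop_outaylor} directly to the pair $E', F' \subset \mathbb{R}^2$ with the Frostman measures $\mu_{E'}, \mu_{F'}$. This yields
\[
\mu_{F'}\bigl(\{\,y' \in F' : \mathcal{L}^1(\Delta_{y'}(E')) > 0\,\}\bigr) > 0.
\]
Finally, I would transport this back to $F$. Unwinding the definition of the pushforward gives
\[
\mu_F\bigl(\{\,y \in F : \mathcal{L}^1(\Delta_{H_2(y)}(E')) > 0\,\}\bigr) > 0,
\]
and by the bi-Lipschitz equivalence through $\Gamma_y$ recorded in \eqref{eq_equivalent_positive_L_measures} with $k = 1$, the condition $\mathcal{L}^1(\Delta_{y'}(E')) > 0$ is equivalent to $\mathcal{L}^1(\Pi_y^1(E)) > 0$. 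Combining these two displays delivers the conclusion.

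The argument is essentially a transfer principle: the only genuine input is the planar Falconer-type result of Proposition \ref{prop_outaylor}, and everything else is change of variables. The one step that requires care, and which I would view as the main obstacle, is justifying that the Frostman exponents are preserved under the nonlinear map $H_2(y) = -\overline{y}/(2|\overline{y}|^2)$. This is precisely where the initial pigeonholing $|\overline{y}| \geq c > 0$ is indispensable: away from the origin, $H_2$ has bounded Lipschitz constant in both directions, so both Hausdorff dimension and Frostman-type ball estimates pass to $F'$ without loss. Once this bi-Lipschitz property is in hand, the remainder of the proof is a direct invocation of Proposition \ref{prop_outaylor} followed by the equivalence \eqref{eq_equivalent_positive_L_measures}.
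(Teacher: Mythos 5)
Your proposal is correct and follows essentially the same route as the paper's proof: push forward to $E', F' \subset \mathbb{R}^2$ via the bi-Lipschitz maps $H_1, H_2$, apply the Guth--Iosevich--Ou--Wang planar pinned distance result (Proposition \ref{prop_outaylor}) to the pushforward Frostman measures, and transfer back through the equivalence \eqref{eq_equivalent_positive_L_measures}. The only cosmetic difference is that you invoke pigeonholing to bound $|\overline{y}|$ away from zero, whereas in this statement compactness together with $F \subset P_3^{\ast}$ already supplies the lower bound automatically.
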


\begin{proof}
As in the proof of Theorem \ref{thm_1a}, we make use of the notations from section \ref{sec_connection} with $n=3$ and $k=1$.

Apply Proposition \ref{prop_outaylor} with the pair of sets $E'$, $F'\subset \R^2$, we have
\begin{align*}
    \mu_{F'}(\{ y'\in F': \mathcal{L}^1(\Delta_{y'}(E))>0\})>0,
\end{align*}
where $\mu_{F'}=H_{2\sharp}\mu_F$. Then by definition of pushforward measure, and the equivalence \eqref{eq_equivalent_positive_L_measures}, we conclude that
\begin{align*}
    \mu_F\big(\{ y\in F: \mathcal{L}^1(\Pi_y^1(E))>0\}\big)>0.
\end{align*}
The proof of Theorem \ref{thm_EF_P3} is thus completed.
\end{proof}

\subsection{Proof of Theorem \ref{thm1.3}}
\begin{proof}[Proof of Theorem \ref{thm1.3}]
As in proof of Theorem \ref{thm_1a}, we use the notations from section \ref{sec_connection} with $n=d$ and $k=1$, but here we assume that $E=F$, $\dim_H(E)=\dim_H(F)=\dim_H(E')=\dim_H(F')=s.$

\begin{itemize}
    \item[(i)] Apply Proposition \ref{prop_IosevichLiu19} $(i)$ with compact sets $E',F'\subset \R^{d-1}$. In view of Proposition \ref{prop_IosevichLiu19} $(i)$, if
    \begin{align}\label{eq_dim_beta_s}
        \dim_H(E')+\frac{d-2}{d}\dim_H(F')>d-2+\beta.
    \end{align}
    then there exists a Frostman measure $\mu_{F'}$ supported on $F'$ such that for $\mu_F'$ a.e. $y'\in F'$,
    \begin{align}\label{eq_dim_beta}
       \dim_H(\Delta_{y'}(E'))\geq \beta.
    \end{align}
    Solving \eqref{eq_dim_beta_s}, one has $s>\frac{d^2-2d+\beta d}{2(d-1)}$. Combining \eqref{eq_dim_beta} with equality \eqref{eq_equal_dim}, we have for $\mu_F$ a.e. $y\in F$,
    \begin{align*}
        \dim_H(\Pi_y^1(E))=\dim_H(\Delta_{y'}(E'))\geq \beta.
    \end{align*}
    \item[(ii)] Arguing as above, one can apply Proposition \ref{prop_IosevichLiu19} $(iii)$ to get the result.
\end{itemize}
\end{proof}

\subsection{Pinned dot-product trees}
In this subsection, we discuss an extension of Theorems \ref{thm1.1} and \ref{thm1.2} to pinned dot-product trees.

Let $T$ be an arbitrary tree with $k+1$ vertices and $k$ edges. Assume that $V(T)=\{v_1, \ldots, v_{k+1}\}$, then the edge set of $T$ can be enumerated as follows:
\[\mathcal{E}(T)=\{ (v_{i_{1}}, v_{i_{2}}), (v_{i_3}, v_{i_4}), \ldots, (v_{i_{2k-1}}, v_{i_{2k}})\},\]
where $i_1\le i_3\le \cdots\le i_{2k-1}$, and $i_{2s}< i_{2t}$ if $s<t$ and $i_{2s-1}=i_{2t-1}$. 

The edge-length vector of $T$ is defined by 
\[(v_{i_1}\cdot v_{i_2}, v_{i_3}\cdot v_{i_4}, \ldots, v_{i_{2k-1}}\cdot v_{i_{2k}}) \in \mathbb{R}^k.\]
Given a set $A\subset \mathbb{R}^2$, we say a tree $T'$, with vertices in $A$ given by $V(T')=\{x_1, \ldots, x_{k+1}\}$, is isomorphic to $T$ if there is a map $\varphi\colon V(T')\to V(T)$ such that $(x, y)$ is an edge of $T'$ iff $\left(\varphi(x), \varphi(y)\right)$ is an edge of $T$. For $x\in V(T')$ and $v\in V(T)$, we say $(T', x)$ is isomorphic to $(T, v)$ if $T$ is isomorphic to $T'$ and $\varphi(x)=v$. We say two isomorphic trees $T$ and $T'$ with $k$ edges are distinct if their edge-length vectors are distinct.

Combining Theorems \ref{thm1.1} and \ref{thm1.2} with an analogue argument in \cite{OuTaylor}, one can able to obtain the following result for pinned dot-product trees. 

\begin{theorem}
\label{thm_pinned_trees}
Given a tree $T$ with $k\ge 1$ edges and an arbitrary vertex $v$. Let $E$ be a compact set on $P_d$, $d\geq 3$. If $\dim_H(E)>\alpha(d)$, then there exists $x\in E$ such that the set of edge-lengths of trees $(T', x)$ which are isomorphic to $(T, v)$ has positive Lebesgue measure. \\
Here $\alpha(2)=\frac{5}{4}$, and $\alpha(d)=\frac{d-1}{2}+\frac{1}{4}+\frac{1}{8d-12}$, for $d\geq 3$.
\end{theorem}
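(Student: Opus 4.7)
The plan is to induct on the number $k$ of edges of $T$, reducing through the bi-Lipschitz connection of Section \ref{sec_connection} to an Ou--Taylor style slicing argument \cite{OuTaylor} for pinned trees, with the single-edge case supplied by Theorems \ref{thm1.1} and \ref{thm1.2}. Following the standard strategy in this line of work, I will prove the following stronger statement by induction: for every tree $T$ with $k$ edges and every designated vertex $v$, and for $\mu_E$-almost every $x \in E$, the natural pushforward measure $\nu_{T, x}$ on $\R^k$ supported on the edge-length vector set of realizations of $(T, v)$ pinned at $x$ satisfies $\|\nu_{T, x}\|_{L^2(\R^k)} < \infty$. Positivity of the Lebesgue measure of the pinned edge-length set then follows from the Cauchy--Schwarz argument used in the proof of Theorem \ref{thm_1a}.

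The base case $k = 1$ is exactly Theorem \ref{thm1.1} for $d \geq 4$ and Theorem \ref{thm1.2} (combined with Proposition \ref{prop_outaylor}) for $d = 3$; in each regime the underlying proofs produce, for $\mu_E$-almost every pin $y \in E$, a measure $\nu_y$ supported on $\Pi_y^1(E)$ with $\|\nu_y\|_{L^2(\R)} < \infty$.

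For the inductive step, let $T$ have $k + 1$ edges and let $u \neq v$ be a leaf with unique neighbor $w$; set $T^{\flat} := T - u$. Then the edge-length vector of $T$ splits as $(\vec{s}, r) \in \R^k \times \R$, where $\vec{s}$ is the edge-length vector of $T^{\flat}$ and $r = x_w \cdot x_u$. Define the marked measure $\widetilde{\nu}_{T^{\flat}, x}$ on $\R^k \times E$ by
\begin{equation*}
    \int F(\vec{s}, z)\, d\widetilde{\nu}_{T^{\flat}, x}(\vec{s}, z) := \int_{E^{k}} F\big(\Phi_{T^{\flat}}(x, \vec{y}),\, y_w\big)\, d\mu_E^{k}(\vec{y}),
\end{equation*}
so that its $\vec{s}$-marginal is $\nu_{T^\flat, x}$ and its $z$-marginal is $\mu_E$. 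By construction,
\begin{equation*}
    d\nu_{T, x}(\vec{s}, r) = \int_{E} d\nu_{z}(r)\, d\widetilde{\nu}_{T^{\flat}, x}(\vec{s}, z),
\end{equation*}
where $\nu_z$ is the one-edge pinned dot-product measure at $z$. Disintegrating $\widetilde{\nu}_{T^{\flat}, x}$ along its $z$-marginal and applying Cauchy--Schwarz yields
\begin{equation*}
    \|\nu_{T, x}\|_{L^2(\R^{k+1})}^{2} \lesssim \int_E \|\nu_{T^{\flat}, x}^{z}\|_{L^{2}(\R^{k})}^{2}\cdot \|\nu_z\|_{L^{2}(\R)}^{2}\, d\mu_E(z),
\end{equation*}
where $\nu_{T^{\flat}, x}^{z}$ is the disintegration slice. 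The base case makes the second factor finite for $\mu_E$-almost every $z$, and the inductive hypothesis will make the first factor finite for $\mu_E \otimes \mu_E$-almost every $(x, z)$; Fubini then gives finiteness of the left-hand side for $\mu_E$-almost every $x$.

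The main obstacle is to rigorously identify $\nu_{T^{\flat}, x}^{z}$ with a pinned tree measure to which the inductive hypothesis applies: a priori this is a doubly-pinned measure on $T^{\flat}$ (at both $x$ and $z$), not a singly-pinned one. The resolution, following the slicing scheme of Ou--Taylor \cite{OuTaylor} carried out on the reduced bipartite distance tree in $\R^{d-1}$ arising from the connection of Section \ref{sec_connection} (using $H_1$ and $H_2$ on the two color classes of the bipartition of $T$), is to dominate the doubly-pinned $L^2$ norm by a singly-pinned one via an additional Cauchy--Schwarz over the positions of the vertices on the unique $x$-to-$z$ path of $T^{\flat}$, after which the inductive hypothesis applies directly. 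The bi-Lipschitz equivalence between dot-product and distance tree measures embodied in Lemma \ref{lem_equivalent_finitness_L2norm} ensures that no quantitative loss is incurred in passing between the paraboloid and Euclidean settings edge-by-edge.
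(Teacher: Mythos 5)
The proposal correctly identifies the high-level strategy that the paper alludes to (combine Theorems \ref{thm1.1}/\ref{thm1.2} with the Ou--Taylor slicing scheme via the paraboloid-to-distance correspondence of Section \ref{sec_connection}), but the inductive step as written contains a genuine gap that the final paragraph of the proposal acknowledges but does not actually repair.

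The problem is in the Cauchy--Schwarz bound
\begin{equation*}
\Vert \nu_{T,x}\Vert_{L^{2}(\R^{k+1})}^{2}\;\lesssim\;\int_{E}\Vert \nu_{T^{\flat},x}^{z}\Vert_{L^{2}(\R^{k})}^{2}\cdot \Vert \nu_{z}\Vert_{L^{2}(\R)}^{2}\,d\mu_E(z).
\end{equation*}
The slice $\nu_{T^{\flat},x}^{z}$, obtained by disintegrating the marked measure $\widetilde{\nu}_{T^{\flat},x}$ along $z=y_w$, is the edge-length measure of $T^{\flat}$ with \emph{both} $v$ and $w$ fixed, and this is generically singular with respect to Lebesgue measure on $\R^{k}$ — hence not in $L^{2}$. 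Already for the path $T=v-w-u$ ($k=1$ after removing $u$), the slice at $z$ is the Dirac mass $\delta_{x\cdot z}$. More generally, conditioning on $y_w=z$ leaves only $k-1$ free vertices to account for $k$ edge lengths; in the dimensional regime of Theorems \ref{thm1.1}--\ref{thm1.2}, where $\dim_H(E)$ can be close to $\tfrac{d-1}{2}$, the resulting pushforward typically lands on a Lebesgue-null set of $\R^k$. So the right-hand side is infinite and the inequality is vacuous. The concluding remark about a further Cauchy--Schwarz ``over the positions of the vertices on the $x$-to-$z$ path'' does not repair this, because the obstacle is not that the doubly-pinned norm is large; it is that the slice is not an $L^{2}$ object at all. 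A secondary issue: the base case in $d=3$ is fed by Proposition \ref{prop_outaylor}, which yields a positive-$\mu_F$-measure set of good pins, not a full-measure set, so one cannot freely intersect ``almost every $x$'' events across the induction without some additional pigeonholing.

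The structurally sound way to run the induction — and what the reference to \cite{OuTaylor} is really pointing at — is the iterative restriction scheme already carried out in this paper's Claim \ref{claim_conditionEF}: rather than disintegrate the full tree measure and hope the slices stay in $L^{2}$, one processes the tree one edge at a time, at each stage choosing nested subsets $E_l\subset E$ with $\mu(E_l)>0$ on which the single-edge pinned $L^{2}$ bound (Theorem \ref{thm_1a} / Theorem \ref{thm_EF_P3} via Lemma \ref{lem_equivalent_finitness_L2norm}) holds, and using Fubini to show that for a positive-measure set of pins the $l$-th edge adds an independent degree of freedom to the edge-length vector. Positivity of the $k$-dimensional Lebesgue measure of the pinned edge-length set is then assembled edge by edge, never requiring an $L^{2}$ bound on the full tree measure or on any doubly-pinned slice. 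If you wish to salvage an $L^{2}$-based argument, you should disintegrate along the edge-length vector $\vec{s}=\Phi_{T^{\flat}}(x,\vec{y})$ of the subtree rather than along the vertex $y_w$; this avoids deltas, but then the leftover weight $\nu_{T^{\flat},x}(\vec{s})$ reappears inside the $y_w$-integral and you will need an $L^{\infty}$ or higher-moment bound on the subtree density that the present hypotheses do not supply, which is exactly why the subset-restriction route is preferable.
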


\section{Proof of Theorem \ref{thm_distance_simplex}}\label{sec_proof5}
In this section, we will apply the connection in Section \ref{sec_connection} to derive a proof for the pinned simplex problem. We first prove several technical lemmas.
\begin{lemma}\label{lem_dotproduct}
Let $E,F\subset P_n^\ast$ be a pair of compact sets with $\dim_H(E)=\dim_H(F)>s>\frac{n+k-1}{2}$, $1\leq k\leq n$. Assume that $\mu_{E}$, $\mu_{F}$ are $s$-Frostman measure supported on $E$ and $F$, respectively. Then we have
\begin{align*}
    \int_{F^k} \int_{\R^k}\vert \nu_{y}(u)\vert^2\,du \, d\mu_{F}^k(y) \,\,<\,\,\infty,
\end{align*}
where for each $y=(y^1,\dots, y^k)\in F$, we denote $\nu_{y}$ as the natural measure supported on $\Pi_{y}^k(E)$
.
\end{lemma}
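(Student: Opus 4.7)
The plan is to reduce, via Lemma \ref{lem_equivalent_finitness_L2norm}, to a pinned-distance $L^2$ estimate in $\R^{n-1}$, exploit the product structure of $\mu_{F'}^k$ by Fubini, and then apply a Fourier-analytic argument combined with the spherical-average bound $\beta(s)$. Lemma \ref{lem_equivalent_finitness_L2norm} (whose comparison constants are uniform on the compact $F^k$) reduces the claim to showing
\[
\int_{(F')^k}\|\eta_{y'}\|_{L^2(\R^k)}^2\,d\mu_{F'}^k(y')<\infty,
\]
where $E'=H_1(E)$, $F'=H_2(F)\subset\R^{n-1}$ and $\mu_{E'}=H_{1\sharp}\mu_E$, $\mu_{F'}=H_{2\sharp}\mu_F$ are $s$-Frostman probability measures on $\R^{n-1}$ (the bi-Lipschitz property of $H_1,H_2$ on $P_n^\ast$ preserving the Frostman exponent).

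Since $\eta_{y'}$ is the pushforward of $\mu_{E'}$ under $\bar x\mapsto(|y^{j'}-\bar x|)_{j=1}^k$, a standard smooth approximation gives
\[
\|\eta_{y'}\|_{L^2}^2=\iint\prod_{j=1}^k\delta\bigl(|y^{j'}-\bar x|-|y^{j'}-\bar z|\bigr)\,d\mu_{E'}(\bar x)\,d\mu_{E'}(\bar z).
\]
Integrating against $\mu_{F'}^k$ and using Fubini together with the factorization of the integrand across the coordinates $y^{j'}$ (afforded by the product structure of $\mu_{F'}^k$) produces
\[
\int_{(F')^k}\|\eta_{y'}\|_{L^2}^2\,d\mu_{F'}^k(y')=\iint G(\bar x,\bar z)^k\,d\mu_{E'}(\bar x)\,d\mu_{E'}(\bar z),
\]
where $G(\bar x,\bar z):=\int_{F'}\delta(|y'-\bar x|-|y'-\bar z|)\,d\mu_{F'}(y')$ is, up to constants, the density at $0$ of the pushforward of $\mu_{F'}$ under $y'\mapsto|y'-\bar x|-|y'-\bar z|$.

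To control $\iint G^k\,d\mu_{E'}^2$, I would use $\delta(u)=\int_\R e^{2\pi iru}\,dr$ together with Plancherel in $\R^{n-1}$ applied to the $\bar x,\bar z$ variables. The $k$-fold product structure produces a multilinear Fourier integral in $\widehat{\mu_{E'}}$ and $\widehat{\mu_{F'}}$ whose kernels come from the polar expansion of $e^{2\pi i r|v|}$ (and hence from the surface measure $\sigma^{n-2}$). Applying the spherical-average bound \eqref{eq_spherical_average} for $\mu_{F'}$ once per $y^{j'}$-integration, and using the $s$-Frostman decay of $\mu_{E'}$, the resulting integral is finite provided
\[
s+k\beta(s)>\tfrac{(n-1)(k+1)}{2};
\]
with $\beta(s)=(n-2)/2$ (valid in the relevant Frostman range), this is exactly $s>(n+k-1)/2$, matching the hypothesis. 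The case $k=1$ recovers Liu's pinned distance bound (Proposition~\ref{prop_Liu_pinned_distance}) as a sanity check.

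The main obstacle lies in this last step: the careful Fourier bookkeeping required to decouple the $k$-fold product in $G^k$ and apply the spherical-average decay $\beta(s)$ iteratively, so that the $s$-Frostman decay of $\mu_{E'}$ can be paired with $k$ copies of $\beta(s)$ from $\mu_{F'}$ to yield the sharp threshold $s>(n+k-1)/2$ (rather than a cruder one obtained by, e.g., a pointwise bound $G\lesssim|\bar x-\bar z|^{-1}$ followed by the energy estimate $I_k(\mu_{E'})<\infty$).
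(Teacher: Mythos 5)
Your proposal has the right target but goes in the wrong direction and leaves the crucial step unresolved. The paper does \emph{not} reduce Lemma~\ref{lem_dotproduct} to a pinned-distance estimate in $\R^{n-1}$; rather, it is the other way around: Lemma~\ref{lem_dotproduct} is proved directly on the paraboloid, and only afterwards does Lemma~\ref{lem_kpinned_distances} transfer it to the distance setting via Lemma~\ref{lem_equivalent_finitness_L2norm}. The key observation that makes the direct approach work is the exact Fourier identity $\widehat{\nu_y}(t)=\widehat{\mu_E}(t\cdot y)$ with $t\cdot y=\sum_j t_jy^j$, which holds because $\nu_y$ is the pushforward of $\mu_E$ under the \emph{linear} map $x\mapsto(x\cdot y^1,\dots,x\cdot y^k)$. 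Plancherel then gives $\iint|\nu_y|^2\,dt\,d\mu_F^k(y)=\iint|\widehat{\mu_E}(t\cdot y)|^2\,dt\,d\mu_F^k(y)$; one mollifies with a smooth bump $\phi$ and invokes the kernel estimate from \cite[Theorem~4]{EHI}, namely $\iint|\widehat\phi(ty-\xi)|\,dt\,d\mu_F^k(y)\lesssim|\xi|^{-s+k-1}$ (a direct Frostman computation on $\mu_F$, not a spherical average), which reduces everything to a single $s$-energy integral for $\mu_E$ and yields precisely the threshold $s>\frac{n+k-1}{2}$. No $\beta(s)$ bound is used anywhere in this proof.

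Your proposal, by contrast, passes to the distance side, factorizes the $k$-fold pinned integral via Fubini into $\iint G(\bar x,\bar z)^k\,d\mu_{E'}^2$ with $G$ a pinned single-distance density, and then claims that ``the $k$-fold product structure produces a multilinear Fourier integral'' which can be controlled by applying the spherical-average bound ``once per $y^{j'}$-integration,'' yielding $s+k\beta(s)>\frac{(n-1)(k+1)}{2}$. That inequality is asserted, not derived, and the derivation is far from automatic: the spherical-average decay $\beta(s)$ is an $L^2$-in-$r$ estimate for a single pin, and there is no straightforward way to iterate it across $k$ independent pins when the $\bar x,\bar z$ variables are shared — the $k$-th power $G^k$ does not decouple into $k$ independent bilinear pairings of $\widehat{\mu_{E'}}$ against $\widehat{\mu_{F'}}$. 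You yourself flag this as ``the main obstacle.'' In short, the factorization step is correct, but the estimate you need for $\iint G^k\,d\mu_{E'}^2$ is the entire content of the lemma, and your sketch replaces a clean linear Fourier computation by a genuinely harder multilinear problem in nonlinear (distance) coordinates without solving it. The cruder pointwise bound $G\lesssim|\bar x-\bar z|^{-1}$ that you mention in passing would give $s>k$, which is not the claimed threshold; so the gap is substantive, not cosmetic.
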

The proof of Lemma \ref{lem_dotproduct} is the same as in \cite[Theorem 4]{EHI}, but we present it here for completeness.

\begin{proof}
First, we may assume that
\begin{align*}
    0<c\leq \vert x\vert, \vert y\vert\leq 1, \quad \forall\, x\in E, \, y\in F, \quad \text{ for some positive constant $c$}.
\end{align*}
Let $y=(y^1,\dots, y^k)\in F^k$, by a simple calculation, one sees that 
\begin{align*}
    \widehat{\nu_y}(t)
    &=\widehat{\mu_E}(t\cdot y),\quad \text{ for all $t=(t_1,\dots, t_k)\in \R^k$},
\end{align*}
where $t\cdot y=t_1y^1+\cdots +t_ky^k$.

It follows that
\begin{align*}
    \iint \vert \widehat{\nu_y}(t)\vert^2\,dt\,d\mu_F^k(y)=\iint \vert \widehat{\mu_E}(t\cdot y)\vert^2dtd\mu_F^k(y).
\end{align*}
Let $\phi\in C_0^\infty(\R^n)$ be a smooth cut-off function satisfying $\chi_{B(0,m/2)}\leq \phi\leq \chi_{B(0,2)}$. Then the above quantity is dominated by
\begin{align*}
    &\lesssim \iint \vert \widehat{\mu_E}\ast\widehat{\phi}(t\cdot y)\vert^2dtd\mu_F^k(y)
    \lesssim \int \vert\widehat{\mu_E}(\xi)\vert^2\,\bigg(\iint \vert \widehat{\phi}(ty-\xi)\vert dtd\mu_F^k(y)\bigg)\,d\xi.
\end{align*}
Using the same argument as in \cite[Theorem 4]{EHI}, one can check that
\begin{align*}
    \iint \vert \widehat{\phi}(ty-\xi)\vert dtd\mu_F^k(y)\lesssim \vert \xi\vert^{-s+k-1}.
\end{align*}
Hence, from the assumption that $s>\frac{n+k-1}{2}$, we get
\begin{align*}
   \iint \vert \widehat{\nu_y}(t)\vert^2\,dt\,d\mu_F^k(y) 
   \lesssim \int \vert\widehat{\mu_E}(\xi)\vert^2 \vert \xi\vert^{-s+k-1}\,d\xi \,\lesssim \, \int \vert\widehat{\mu_E}(\xi)\vert^2 \vert \xi\vert^{s-n}\,d\xi \, <\infty.
\end{align*}
\end{proof}
Using $n=d+1$, we obtain the following.
\begin{lemma}\label{lem_kpinned_distances}
Assume that $E',F'\subset \R^d$ are compact sets, with $\dim_H(E'),\dim_H(F')>s>\frac{d+k}{2}$, for $1\leq k\leq d$. 

Then there exist $s$-Frostman probability measures supported on $E'$ and $F'$ respectively, such that for $\mu_{F'}^k$ a.e. $y'=(y^{1'},\dots, y^{k'})\in (F')^{k}$, 
\begin{align*}
    \int \vert \eta_{y'}(t)\vert^2dt<\infty,
\end{align*}
where $\eta_{y'}$ is the natural measure supported on $\Delta_{y'}^k(E')$, see \eqref{eq_definition_eta}.
\end{lemma}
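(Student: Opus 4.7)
The plan is to deduce Lemma 4.3 from Lemma 4.2 by reversing the connection set up in Section \ref{sec_connection}: rather than projecting sets on the paraboloid $P_{d+1}^\ast$ down to $\R^d$, I will lift the given sets $E', F' \subset \R^d$ up to $P_{d+1}^\ast$. Concretely, after pigeon-holing to assume $F'$ is bounded away from the origin (so that $H_2^{-1}$ makes sense and is bi-Lipschitz), define
\[
E \,:=\, \{(\overline{x}, |\overline{x}|^2): \overline{x} \in E'\}\,\subset\, P_{d+1}^\ast, \qquad F \,:=\, H_2^{-1}(F')\,\subset\, P_{d+1}^\ast.
\]
Since $H_1$ and $H_2$ restrict to bi-Lipschitz homeomorphisms on the relevant compact pieces of $P_{d+1}^\ast$, the Hausdorff dimensions of $E, F$ equal those of $E', F'$, and both exceed $s$.

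Next, choose any $s$-Frostman probability measures $\mu_{E'}, \mu_{F'}$ on $E', F'$ (guaranteed by Frostman's lemma), and pull them back to $\mu_E := (H_1^{-1})_\sharp \mu_{E'}$ and $\mu_F := (H_2^{-1})_\sharp \mu_{F'}$. By the bi-Lipschitz property these are $s$-Frostman on the lifts. Apply Lemma \ref{lem_dotproduct} with $n=d+1$: the hypothesis $s > \frac{d+k}{2}=\frac{n+k-1}{2}$ is exactly what is required, and therefore
\[
\int_{F^k}\int_{\R^k} |\nu_y(u)|^2\,du\,d\mu_F^k(y) \,<\, \infty,
\]
where $\nu_y$ is the natural measure on $\Pi_y^k(E)$ introduced in \eqref{eq_definition_nu}.

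Finally, Lemma \ref{lem_equivalent_finitness_L2norm} gives, for every $y \in F^k$, the equivalence $\|\nu_y\|_{L^2} \approx \|\eta_{y'}\|_{L^2}$, with $y' := (H_2(y^1),\dots,H_2(y^k)) \in (F')^k$. Substituting this into the previous display and changing variables via the pushforward $\mu_{F'}^k = (H_2^{\otimes k})_\sharp \mu_F^k$ yields
\[
\int_{(F')^k}\int_{\R^k} |\eta_{y'}(t)|^2\,dt\,d\mu_{F'}^k(y') \,<\, \infty,
\]
whence Fubini produces the $\mu_{F'}^k$-almost-everywhere finiteness claimed in the statement. I do not expect any serious obstacle here — the work has essentially been front-loaded into Section \ref{sec_connection} and Lemma \ref{lem_dotproduct}, and what remains is bookkeeping to verify that the bi-Lipschitz maps $H_1, H_2$ transfer the Frostman condition and the $L^2$ integrability cleanly between the paraboloid and $\R^d$.
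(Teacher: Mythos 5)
Your proposal matches the paper's proof almost step for step: lift $E'$ via $H_1^{-1}$ and $F'$ via $H_2^{-1}$ to $P_{d+1}^\ast$, push the Frostman measures forward, invoke Lemma \ref{lem_dotproduct} with $n=d+1$, and transfer the $L^2$ bound back via Lemma \ref{lem_equivalent_finitness_L2norm}. The only difference is bookkeeping in the normalization step: the paper pigeon-holes and additionally uses translation/rotation/dilation invariance to arrange $E'\subset B(0,1)\setminus B(0,c)$ and $F'\subset \R^d\setminus B(0,1)$ so that both lifts land on the truncated paraboloid and away from the origin, whereas you only explicitly ensure $F'$ avoids the origin, but this is easily repaired and you flag it as such.
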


\begin{proof}

First, by pigeon-hole, we can assume that $E'$ and $F'$ are contained in two disjoint balls with positive distance. Moreover, since the $k-$simplex problem is invariant under rotations, translations, and dilations, we can further assume that
\begin{align*}
    E'\subset B(0,1)\setminus B(0,c)\,\,\text{ and }\,\,F'\subset \,\R^d \setminus B(0,1),
\end{align*}
for some constant $c>0$, where $B(0,r)$ denotes the open ball centered at $0$ of radius $r>0$.
Recall that in Section \ref{sec_connection} with $n=d+1$, we consider the following maps:
\begin{align*}
    H_1: P_{d+1}^\ast \to (\R^\ast)^d \quad, \quad H_2:P_{d+1}^\ast\to (\R^\ast)^d.
\end{align*}
Put $\tilde{E}=H_1^{-1}(E)$, $\tilde{F}=H_2^{-1}(F')$. Notice that these sets are indeed contained on the truncated paraboloid $P_{d+1}$.

Now let $\mu_{E'}$ and $\mu_{F'}$ be $s-$Frostman probability measures supported on $E'$ and $F'$, respectively. We define the corresponding measures supported on $\tilde{E}$ and $\tilde{F}$ by
\begin{align*}
    \mu_{\tilde{E}}=H_{1\sharp}^{-1}(\mu_{E'}),\quad \mu_{\tilde{F}}=H_{2\sharp}^{-1}(\mu_{F'}).
\end{align*}
We see that $\tilde{E}$ and $\tilde{F}$ satisfy conditions of Lemma \ref{lem_dotproduct}. Hence, Lemma \ref{lem_dotproduct} implies that for $\mu_{\tilde{F}}^k$ a.e. $\tilde{y}=(\tilde{y}^1,\dots, \tilde{y}^k)\in \tilde{F}^k$, we have
\begin{align*}
    \int \vert \nu_{\tilde{y}}(u)\vert^2du  \,<\,\infty,
\end{align*}
where $\nu_{\tilde{y}}$ is the natural measure supported on $\Pi_{\tilde{y}}^k(\tilde{E})$.

Apply Lemma \ref{lem_equivalent_finitness_L2norm}, we conclude that for $\mu_{F'}^k$ a.e. $y'=(y^{1'},\dots, y^{k'})\in (F')^k$,
\begin{align*}
    \int \vert \eta_{y'}(t)\vert^2\,dt \lesssim \int \vert \nu_{\tilde{y}}(u)\vert^2du  \,<\,\infty,
\end{align*}
as required. The proof of lemma is completed.
\end{proof}

Now we are ready to give the proof of Theorem \ref{thm_distance_simplex}.
\begin{proof}[Proof of Theorem \ref{thm_distance_simplex}]
Let $E\subset \R^d$ be a compact set, with $\dim_H(E)>\frac{d+k}{2}$, $1\leq k\leq d$. Assume that $\mu$ is an $s-$Frostman probability measure supported on $E$, with $\dim_H(E)>s>\frac{d+k}{2}$. To avoid repetition, for any set $A\subset E$, we denote $\mu_A=\mu\vert_A$ as the restriction of measure $\mu$ on $A$.

We will show that there exist subsets $E_1,\ldots, E_{k+1}$ of $E$, such that $\mu(E_l)>0$, for all $ \, 1\leq l\leq k+1$, and for positive many $y^1\in E_1$, the set
\begin{align*}
 T_{y^1}(E_2,\dots,E_{k+1}):=\Big\{  \Big( \big\vert y^i-y^j\big\vert\Big)_{1\leq i<j\leq k+1}\colon y^j\in E_j, \forall\, 2\leq j\leq k+1\Big\}
\end{align*}
has positive Lebesgue measure.

In fact, we will prove the following.

\begin{claim}\label{claim_conditionEF}
There exist subsets $E_l$, $F_l$ of $E$, for $2\leq l\leq k+1$, such that:
\begin{itemize}
    \item[(i)] $E_{k+1}, F_{k+1}\subset E$\,; and  $E_l, F_l\subset F_{l+1}$, for $2\leq l\leq k$.
    \item[(ii)] For each $2\leq l\leq k+1$, $E_l$ and $F_l$ are contained in two disjoint balls of positive distance, and $\mu(E_l)>0$, $\mu(F_l)>0$.
    \item[(iii)] For each $2\leq l\leq k+1$, we have for $\mu_{F_l}^{l-1}$ a.e. $(y^1,\dots, y^{l-1})\in F_l^{l-1}$, 
    \begin{align*}
        T_{y^1,\dots, y^{l-1}}(E_{l}, \dots, E_{k+1})
        :=\Big\{ \Big( \big\vert y^i-y^j\big\vert\Big)_{\substack{1\leq i<j\leq k+1\\ j\geq l}}\colon y^j\in E_j, \forall\, l\leq j\leq k+1\Big\}.
    \end{align*}
    has positive $\mathcal{L}^{k+\cdots + (l-1)}$ measure.
\end{itemize}
\end{claim}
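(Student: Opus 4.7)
The plan is to prove Claim \ref{claim_conditionEF} by downward induction on $l$, from $l = k+1$ down to $l = 2$. Throughout the construction I would fix a single $s$-Frostman probability measure $\mu$ on $E$ with $\frac{d+k}{2} < s < \dim_H(E)$, and take $\mu_A := \mu|_A$ (suitably renormalized) on every subset $A \subset E$ that appears; since $\mu$ is $s$-Frostman, so is every such restriction. All the sets $E_l, F_l$ will be obtained by intersecting $E$ (in the base case) or the previously constructed $F_{l+1}$ (in the inductive step) with small balls chosen via pigeonholing on $\mu$.

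For the base case $l = k+1$, I pick two disjoint closed balls $B_1, B_2$ with $\mu(E \cap B_1), \mu(E \cap B_2) > 0$, and set $E_{k+1} = E \cap B_1$, $F_{k+1} = E \cap B_2$. Conditions (i) and (ii) are immediate. For (iii), observe that $T_{y^1, \ldots, y^k}(E_{k+1})$ is exactly the $k$-fold pinned distance set $\Delta^k_{(y^1, \ldots, y^k)}(E_{k+1})$; since both $E_{k+1}$ and $F_{k+1}$ support $s$-Frostman measures with $s > \frac{d+k}{2}$, Lemma \ref{lem_kpinned_distances} (with parameter $k$) yields $\int |\eta_{y'}(t)|^2\,dt < \infty$ for $\mu^k_{F_{k+1}}$ a.e.\ $(y^1, \ldots, y^k)$, and the Cauchy--Schwarz argument from the proof of Theorem \ref{thm_1a} upgrades this to positive $\mathcal{L}^k$ measure.

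For the inductive step, assume (i)--(iii) hold at levels $l+1, \ldots, k+1$. Again by pigeonholing, pick disjoint balls $B, B' \subset F_{l+1}$ and set $E_l = F_{l+1} \cap B$, $F_l = F_{l+1} \cap B'$, both of positive $\mu$-measure; conditions (i) and (ii) follow. For (iii), apply the inductive hypothesis at level $l+1$ together with Fubini: since $F_l, E_l \subset F_{l+1}$ and $\mu|_{F_l}, \mu|_{E_l}$ are absolutely continuous with respect to $\mu|_{F_{l+1}}$, for $\mu^{l-1}_{F_l}$ a.e.\ $(y^1, \ldots, y^{l-1}) \in F_l^{l-1}$ the ``good set'' $A := \{\, y^l \in E_l : T_{y^1, \ldots, y^l}(E_{l+1}, \ldots, E_{k+1}) \text{ has positive } \mathcal{L}^{l+\cdots+k} \text{ measure}\,\}$ has positive $\mu_{E_l}$-measure.

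Now apply Lemma \ref{lem_kpinned_distances} a second time, with parameter $l-1$ and the pair $(E_l, F_l)$ (permissible since $s > \frac{d+k}{2} \ge \frac{d + l - 1}{2}$): for $\mu^{l-1}_{F_l}$ a.e.\ $(y^1, \ldots, y^{l-1})$, the pushforward of $\mu_{E_l}$ under $\Psi(y^l) := (|y^i - y^l|)_{i=1}^{l-1}$ has $L^2$ density on $\R^{l-1}$, and in particular is absolutely continuous with respect to $\mathcal{L}^{l-1}$. Restricting to the intersection of the two full-measure sets of pinning tuples, this absolute continuity transfers $\mu_{E_l}$-positivity of $A$ into $\mathcal{L}^{l-1}$-positivity of $\Psi(A)$. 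Since $T_{y^1, \ldots, y^{l-1}}(E_l, \ldots, E_{k+1}) \supset \bigcup_{y^l \in A} \{\Psi(y^l)\} \times T_{y^1, \ldots, y^l}(E_{l+1}, \ldots, E_{k+1})$, a Fubini fibration over $\Psi(A)$ then gives positive $\mathcal{L}^{(l-1) + l + \cdots + k}$ measure, closing the induction. The main obstacle I foresee is this final combining step: verifying measurability of the fibered union and tracking that all the ``a.e.'' statements -- from Fubini on the inductive hypothesis, and from the second application of Lemma \ref{lem_kpinned_distances} -- refer to the same underlying measure $\mu^{l-1}_{F_l}$ on $F_l^{l-1}$, so that a single pinning tuple can be chosen on which all positivity conclusions hold simultaneously.
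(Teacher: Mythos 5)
Your proof follows essentially the same route as the paper: the same base case via Lemma \ref{lem_kpinned_distances}, the same pigeonholing in the inductive step, the same Fubini slicing of the inductive ``good'' set, the same second application of Lemma \ref{lem_kpinned_distances} at parameter $l-1$, and the same fibering over $\Psi(A)$ to close the induction. The only (cosmetic) difference is that you use positivity of $\mu_{E_l}(A)$ together with absolute continuity of $\Psi_\sharp\mu_{E_l}$, whereas the paper observes via Fubini that the slice has \emph{full} $\mu_{E_l}$-measure, so the pushforward of the restricted measure coincides exactly with $\eta_{y'}$ and Cauchy--Schwarz applies directly; both versions yield the same conclusion.
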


\begin{proof}

We will prove the claim by induction and begin with case $l=k+1$.

\textbf{Case $l=k+1$:} \\
Consider the set $E$, by pigeon-hole, there exist two subsets of $E$, denoted by $E_{k+1}$ and $F_{k+1}$, such that they are contained in two disjoint balls with positive distance, and
$
    \mu(E_{k+1})>0\,,\,\, \mu(F_{k+1})>0.
$

In view of Lemma \ref{lem_kpinned_distances}, it is not difficult to see that for $\mu_{F_{k+1}}^k$ a.e. $y=(y^1,\dots, y^k)\in F_{k+1}^k$, 
\begin{align}
    \int \vert \eta_y(t)\vert^2\,dt<\infty, \quad \text{ and } \quad \mathcal{L}^k\big(\Delta_{y}^k(E_{k+1})\big)>0,
\end{align}
where $\Delta_y^k(E_{k+1})=\big\{ (\vert y^1-x\vert, \vert y^2-x\vert, \dots, \vert y^k-x\vert\,)\colon\, x\in E_{k+1}\big\}$. Notice that $\Delta_y^k(E_{k+1})$ coincides with $T_{y^1,\dots,y^{k}}(E_{k+1})$, so the case $l=k+1$ holds true.

\textbf{Inductive step:} \\
Assume that for $l+1\leq j\leq k+1$, we constructed disjoint sets $E_{j}, F_{j}$ satisfying $(i), (ii)$ and $(iii)$. In particular, assume that for $\mu_{F_{l+1}}^l$ a.e. $y=(y^1,\dots, y^l)\in F_{l+1}^{l}$,
\begin{align}\label{eq_positive_induction}
    \mathcal{L}^{k+\cdots+l}\Big(T_{y^1,\dots, y^{l}}\big(E_{l+1},\dots, E_{k+1}\big)\Big)\,>\,0.
\end{align}
We need to prove that there exist $E_{l}, F_{l}$ satisfy conditions $(i),(ii),(iii)$ of the claim.

Using pigeon hole again with compact set $F_{l+1}$, there exist two subsets $E_{l}$, $F_{l}\subset F_{l+1}$ such that they are contained in two disjoint balls with positive distance, 
$    \mu(F_{l})>0$ and $\mu(E_{l})>0.$
Then \eqref{eq_positive_induction} yields that for $\mu_{F_{l}}^{l-1}\times \mu_{E_{l}}$ a.e. $y=(y^1,\dots, y^{l-1},y^l)\in F_{l}^{l-1}\times E_{l}$,
\begin{align}\label{eq_positive_kdistances_restriction}
    \mathcal{L}^{k+\cdots+l}\Big(T_{y^1,\dots, y^{l}}\big(E_{l+1},\dots, E_{k+1}\big)\Big)\,>\,0.
\end{align}
Put
$
    \mathcal{E}_l:=\big\{ (y^1,\dots, y^{l-1},y^l)\in F_{l}^{l-1}\times E_{l}\,\colon \mathcal{L}^{k+\cdots+l}\Big(T_{y^1,\dots, y^{l}}(E_{l+1},\dots, E_{k+1})\Big)\,>\,0\big\}.
$\\
The above discussion implies that 
\begin{align}\label{eq_equal_measures}
    \mu_{F_{l}}^{l-1}\times \mu_{E_{l}}\big(\mathcal{E}_l\big) =\big(\mu(F_{l})\big)^{l-1} \cdot \mu(E_{l})\, >\,0.
\end{align}
Now, for each $y'=(y^1,\dots, y^{l-1})\in F_{l}^{l-1}$, we put
\begin{align*}
    \mathcal{E}_{l,y'}:=\big\{ y^l\in E_{l}\colon (y^1,\dots, y^{l-1},y^l)\in \mathcal{E}_{l}\big\}.
\end{align*}
Then define
$
    \mathcal{F}_{l-1}:=\big\{ (y^1,\dots, y^{l-1})\in F_{l}^{l-1}: \mu_{E_{l}}\big(\mathcal{E}_{l,y'}\big)=\mu(E_{l})\big\}.
$
Using Fubini and \eqref{eq_equal_measures}, it is easy to see that
\begin{align}\label{eq_equal_measures_F}
    \mu_{F_{l}}^{l-1}\big(\mathcal{F}_{l-1}\big)=\big(\mu(F_{l})\big)^{l-1}>0.
\end{align}
Next, for each $y'=(y^{1},\dots, y^{l-1})\in F_{l}^{l-1}$, we define the following sets
\begin{align*}
 \Delta_{y'}^{l-1}\big(E_{l}\big)&:= \big\{ (\vert z-y^1\vert,\dots, \vert z-y^{l-1}\vert)\colon z\in E_{l}\big\},\\
    \Delta_{y'}^{l-1}\big(\mathcal{E}_{l,y'}\big)&:= \big\{ (\vert z-y^1\vert,\dots, \vert z-y^{l-1}\vert)\colon z\in \mathcal{E}_{l,y'}\big\}.
\end{align*}
Let $\eta_{y'}$ be the natural measure supported on $\Delta_{y'}^{l-1}\big(E_{l}\big)$. Then we may define measure $\theta_{y'}$, supported on $\Delta_{y'}^{l-1}\big(\mathcal{E}_{l,y'}\big)$, by relation
\begin{align*}
    \int_{\R^{l-1}}f(t)d\theta_{y'}(t)=\int_{\R^{l-1}}f(t)\cdot \chi_{ \Delta_{y'}^{l-1}(E_{l})}(t)\, d\eta_{y'}(t),\quad \forall\, f\in C_0(\R^{l-1}).
\end{align*}
Using the fact that $\mu_{E_{l}}\big(E_{l-1}\setminus \mathcal{E}_{l,y'}\big)=0$, one can check that for all $\xi\in \R^{l-1}$,
\begin{align*}
    \widehat{\theta_{y'}}(\xi)
    &=\widehat{\eta_{y'}}(\xi).
\end{align*}
Hence by Plancherel formula, one has
\begin{align}\label{eq_equal_L2norm_simplex}
    \int \vert \theta_{y'}(t)\vert^2dt = \int \vert \eta_{y'}(t)\vert^2dt.
\end{align}
On the other hand, apply Lemma \ref{lem_kpinned_distances} for the pair of sets $F_{l}$ and $E_{l}$, it is not difficult to see that for $\mu_{F_{l}}^{l-1}$ a.e. $y'\in F_{l}^{l-1}$,
\begin{align*}
    \int \vert \eta_{y'}(t)\vert^2dt<\infty.
\end{align*}
Combining with \eqref{eq_equal_measures_F} and \eqref{eq_equal_L2norm_simplex}, we conclude that for $\mu_{F_{l}}^{l-1}$ a.e. $y'=(y^1,\dots, y^{l-1})\in F_{l}^{l-1}$,
\begin{align*}
    \mathcal{L}^{l-1}\big(\Delta_{y'}^{l-1}\big(\mathcal{E}_{l,y'}\big)\big)>0.
\end{align*}
According to \eqref{eq_positive_kdistances_restriction}, the above implies that for $\mu_{F_{l}}^{l-1}$ a.e. $y'=(y^1,\dots, y^{l-1})\in F_{l}^{l-1}$,
\begin{align*}
    \mathcal{L}^{k+\cdots+(l-1)}\Big(T_{y^1,\dots, y^{l-1}}(E_{l},\dots, E_{k+1})\Big)\,>\,0.
\end{align*}
The induction step then follows, and we complete the proof of claim.
\end{proof}

From Claim \ref{claim_conditionEF}, put $E_1=F_2$, we have for $\mu_{E_1}$ a.e. $y^1\in E_1$,
\begin{align*}
    \mathcal{L}^{\binom{k+1}{2}}\Big(T_{y^1}(E_2,\dots, E_{k+1})\Big)\,>\,0.
\end{align*}
Thus we complete the proof of Theorem \ref{thm_distance_simplex}.
\end{proof}

\section{Sharpness of Theorem \ref{thm1.1}}\label{sharpness}
In this section, we will give an example to show that $\frac{d-1}{2}$ is the best dimensional threshold that one can expect for the problem of product of sets on paraboloids. This example is inspired by a construction due to K. Falconer in $\cite{Fal85}$.

First, for convenience, in this section, we denote the truncated paraboloid in $\R^d$ by $P_d:=\{(\overline{x}, \vert \overline{x}\vert^2)\in \R^d: |\overline{x}|\le 3\}$.
\begin{proposition}
Let $d\geq 3$. For any $0<s\le d$, there exists a compact set $E\subset P_d$ with $\dim_H(E)=s$ and
\begin{align*}
    \dim_H\left(\Pi(E)\right)\leq \frac{2}{d-1}\dim_H(E).
\end{align*}
\end{proposition}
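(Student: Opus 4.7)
The plan is to adapt Falconer's 1985 Cantor-set construction to the paraboloid setting. The construction will place a Falconer-style set $E$ in $P_d$ whose projection $\overline{E} \subset \R^{d-1}$ is concentrated on a fixed sphere, so that $\Pi(E)$ reduces to an affine image of the distance set of $\overline{E}$, which can then be controlled via a lattice-on-sphere argument. Throughout, the interesting regime is $s\le (d-1)/2$, because when $s>(d-1)/2$ the claimed bound $\tfrac{2s}{d-1}>1$ is vacuous for $\Pi(E)\subset\R$ and any compact $s$-dimensional $E\subset P_d$ works trivially.

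First I would reduce to a distance-set estimate. Fix $c \in (0,3]$ and require $\overline{E}$ to lie on the sphere $S := \{\overline{x} \in \R^{d-1} : |\overline{x}| = c\}$. Then the natural lift $E = \{(\overline{x}, c^2) : \overline{x} \in \overline{E}\}$ sits in $P_d$, and since $|\overline{a}|^2|\overline{b}|^2 = c^4$ is constant while $\overline{a}\cdot\overline{b} = c^2 - \tfrac{1}{2}|\overline{a}-\overline{b}|^2$ on $S$, one has
\[
    a \cdot b = \overline{a}\cdot\overline{b} + |\overline{a}|^2|\overline{b}|^2 = c^4 + c^2 - \tfrac{1}{2}|\overline{a}-\overline{b}|^2.
\]
Hence $\Pi(E)$ is an affine image of $\{|\overline{a}-\overline{b}|^2 : \overline{a}, \overline{b}\in \overline{E}\}$, giving $\dim_H(\Pi(E)) = \dim_H(\Delta(\overline{E}))$. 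Since $\overline{x}\mapsto(\overline{x},c^2)$ is bi-Lipschitz, $\dim_H(E) = \dim_H(\overline{E})$. So the task reduces to constructing $\overline{E}\subset S$ with $\dim_H(\overline{E}) = s$ and $\dim_H(\Delta(\overline{E})) \le \tfrac{2s}{d-1}$.

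Next I would carry out the Cantor construction. Pick a rapidly growing sequence of positive integers $q_k$ (with $q_{k+1}$ dominating any polynomial in $q_1,\dots,q_k$). For each $k$, let $V_k \subset \Z^{d-1}$ be a large collection of integer vectors with $|v|^2 \approx c^2 q_k^2$, so that $v/q_k$ lies close to $S$, and put
\[
    \overline{E}_k = \bigcup_{v \in V_k} B\!\left(\tfrac{v}{q_k},\,r_k\right) \cap S,
\]
with $r_k$ chosen so that $|V_k|\,r_k^s \asymp 1$. Setting $\overline{E} = \bigcap_k \overline{E}_k$, a standard mass-distribution argument gives $\dim_H(\overline{E}) = s$. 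To bound $\dim_H(\Delta(\overline{E}))$, observe that for $\overline{a},\overline{b}\in\overline{E}_k$ approximated by $u/q_k$ and $v/q_k$ with $u,v\in V_k$, the squared distance is $|\overline{a}-\overline{b}|^2 = (|u|^2 + |v|^2 - 2\,u\cdot v)/q_k^2 + O(r_k)$, which is essentially a non-negative integer multiple of $q_k^{-2}$ bounded by $O(1)$. So $\Delta(\overline{E})^2$ is covered by $O(q_k^2)$ intervals of length $O(r_k)$, and matching this covering against the scale $r_k$ prescribed by $|V_k|\,r_k^s \asymp 1$ yields $\dim_H(\Delta(\overline{E})) \le \tfrac{2s}{d-1}$.

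The main obstacle is calibrating the last step so that the exponent comes out to exactly $\tfrac{2s}{d-1}$. The size $|V_k|$ is governed by the number of representations of an integer near $c^2 q_k^2$ as a sum of $d-1$ squares, a classical number-theoretic quantity that depends sensitively on $d$ and on arithmetic properties of $c$ and $q_k$. One therefore has to choose $c$ and the sequence $q_k$ with care so that sufficiently many $v\in V_k$ are available to realize every Hausdorff dimension $s\in(0,(d-1)/2]$. When a single sphere does not carry enough lattice points, I would replace it by an annular family of spheres whose radii form a Falconer-type subset of a small interval $[c,c+\delta]$; this preserves the constancy of $|\overline{a}|^2|\overline{b}|^2$ up to controlled error without inflating the resulting dimension of $\Pi(E)$, and delivers the same bound.
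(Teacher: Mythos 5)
Your construction takes a genuinely different route from the paper's: the paper places the Falconer--Cantor set in the full box $[1,2]^{d-1}$ (so that generation $k$ has $q_k^{d-1}$ lattice cells) and analyzes the map $\Phi(\overline{x},\overline{y})=\bigl|\overline{y}/|\overline{y}|+2|\overline{y}|\,\overline{x}\bigr|^2=1+4\,\overline{x}\cdot\overline{y}+4|\overline{x}|^2|\overline{y}|^2$ on lattice pairs, whereas you pin $\overline{E}$ to a sphere $\{|\overline{x}|=c\}$ so that $|\overline{a}|^2|\overline{b}|^2\equiv c^4$ drops out and $\Pi(E)$ becomes an affine image of the squared distance set. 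Your identity $a\cdot b=c^4+c^2-\tfrac12|\overline{a}-\overline{b}|^2$ on the sphere is correct, and this reduction-to-distances idea is clean. The problem is the calibration, and it is not a cosmetic one.

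To obtain $\dim_H(\Pi(E))\le 2s/(d-1)$ from a covering of $\Delta(\overline{E})^2$ by $O(q_k^2)$ intervals of length $r_k$, you need $r_k=q_k^{-(d-1)/s}$, hence $|V_k|\asymp r_k^{-s}=q_k^{d-1}$. A sphere cannot supply that many lattice points: the number of $v\in\mathbb{Z}^{d-1}\cap[q_k,2q_k]^{d-1}$ with $|v|^2$ equal to a fixed integer $N\approx c^2q_k^2$ is only of order $q_k^{d-3}$ (by Hardy--Littlewood when $d-1\ge5$, and sparser and arithmetically delicate when $d-1\le4$). With $|V_k|\asymp q_k^{d-3}$ the calibrated radius is $r_k=q_k^{-(d-3)/s}$ and your covering then gives $\dim_H(\Pi(E))\le 2s/(d-3)$, strictly weaker than $2s/(d-1)$; moreover $\dim_H(\overline{E})\le d-2$ on a sphere, so the stated range $0<s\le d$ is unreachable. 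The annulus fix you suggest does not restore the exponent: an annulus of thickness $T$ in $|v|^2$ contains $\asymp T q_k^{d-3}$ lattice points, but then $|\overline{a}|^2|\overline{b}|^2$ drifts by $O(T/q_k^2)$, so the covering intervals must have length $\max\bigl(r_k,\ T/q_k^2\bigr)$; optimizing $T$ gives a bound of order $2(s+1)/(d-1)$, again short of the target. So as sketched, the sphere/annulus route proves a strictly weaker proposition. For comparison, the paper's full-box choice has the opposite difficulty: it has plenty of lattice points, but on lattice pairs $p,p'\in\tfrac1{q_k}\mathbb{Z}^{d-1}$ one has $\overline{x}\cdot\overline{y}\in\tfrac1{q_k^2}\mathbb{Z}$ while $|\overline{x}|^2|\overline{y}|^2\in\tfrac1{q_k^4}\mathbb{Z}$, so when reading that proof you should check with care the claim that $\Phi(p,p')$ takes only $O(q_k^2)$ distinct values, since that is not automatic from the inner-product term alone.
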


\begin{proof}
Fix $0<s\leq d$. For each $x\in \R^d$, we denote $\overline{x}=(x_1,\dots, x_{d-1})\in \R^{d-1}$. Choose a rapidly increasing sequence of positive integers $\{q_k\}_{k=1}^\infty$, say $q_{k+1}\geq q_k^k$ for each $k$.

For each $k$, we define
\begin{align*}
    E_k'&:=\{ \overline{x}\in \R^{d-1}: 1\leq x_j\leq 2,\, \vert x_j-p_j/q_k\vert \leq q_k^{-(d-1)/s},\\ 
    &\hspace{2cm}\text{ for some integer } q_k\leq p_j\leq 2q_k, \text{ for } j=1,2,\dots, d-1\}.
\end{align*}
Then put $E'=\bigcap_{k=1}^\infty E_k'$. By elementary calculations, it can be checked that $\dim_H(E')=s$, see \cite[Theorem 8.15]{Falbook85} or \cite[Example 4.7]{Falbook14}.

Define $
    E= \{ (\overline{x},\vert \overline{x}\vert^2)\in P_d \colon \overline{x}\in E'\}$.
    One sees that $E$ lies on the paraboloid $P_d$ and $\dim_H(E)=s$ as required. Thus we only need to check that 
    \begin{align*}
        \dim_H(\Pi(E))\leq 2\dim_H(E)/(d-1).
    \end{align*}

Consider the map 
\begin{align*}
    \Phi: [1,2]^{d-1}\times [1,2]^{d-1}\to \big[ 0,\infty\big)\,\, , \quad (\overline{x},\overline{y})\mapsto \bigg\vert \frac{\overline{y}}{\vert \overline{y}\vert}+2\vert \overline{y}\vert \overline{x}\bigg\vert^2.
\end{align*}
For all $x, y\in E$, we have the following equivalence
\begin{align*}
    x\cdot y=t \quad \Longleftrightarrow \quad
    \Phi(\overline{x},\overline{y})=1+4t.
\end{align*}
In particular, this implies that $\dim_H(\Pi(E))=\dim_H(\Phi(E',E'))$. So it suffices to give an upper bound for $\dim_H(\Phi(E',E'))$.

Clearly, one has $\Phi(E',E')\subset \bigcap_{k=1}^\infty \Phi(E_k',E_k')$, so any cover of $\Phi(E_k',E_k')$ would also a cover $\Phi(E',E')$ as well. From definition, the set $E_k'$ consist of cubes of side length $q_k^{-(d-1)/s}$, whose centered at points of the form $(p_1/q_k,\dots, p_{d-1}/q_k)$. Thus given any $\overline{x}, \overline{y}\in E_k'$, we must have $\overline{x}$ and $\overline{y}$ are contained in some cubes centered at $p_{\overline{x}}$ and $p_{\overline{y}}$, respectively. Moreover, one can check that the map $\Phi$ has continuous derivative, which is bounded on compact set. This yields that $\Phi$ is Lipschitz with some Lipschitz constant $L>0$. Combining these facts, one has
\begin{align*}
    \big\vert\Phi(\overline{x},\overline{y}) -\Phi(p_{\overline{x}},p_{\overline{y}}) \big\vert \lesssim Lq_k^{-(d-1)/s}, \quad \forall \overline{x},\overline{y}\in E_k'.
\end{align*}

Let $p=(p_1/q_k,\dots, p_{d-1}/q_k)$ and $p'=(p_1'/q_k,\dots, p_{d-1}'/q_k)$, where  $q_k\leq p_j,p_j'\leq 2q_k$. Using the fact that $\sqrt{d}\leq \vert p'\vert\leq 2\sqrt{d}$, we have
\begin{align*}
    \Phi(p,p')=\bigg\vert \frac{p'}{\vert p'\vert
    }+2\vert p'\vert p\bigg\vert^2 \leq  \frac{r}{q_k^2},
\end{align*}
for some integer $r$ satisfying $0\leq r\lesssim d^2q_k^2$. 

Roughly speaking, the above discussion tells us that for each $k$, we can cover $\Phi(E_k',E_k')$ by at most $Cd^2q_k^2$ intervals of length $cLq_k^{-(d-1)/s}$ for some constants $C,c>0$. This indeed implies that
\begin{align*}
    \dim_H(\Phi(E',E'))\leq \dim_B (\Phi(E',E'))\leq \lim_{k\to \infty}\frac{\log Cd^2q_k^2}{-\log (cLq_k^{-(d-1)/s})}=\frac{2s}{d-1}=\frac{2\dim_H(E')}{d-1}.
\end{align*}
We complete the proof of the theorem.
\end{proof}

\end{document}